\numberwithin{equation}{section}
\newcommand{\ZZ}{\mathbb{Z}}
\newcommand{\RR}{\mathbb{R}}
\newcommand{\Q}{\mathbb{Q}}
\newcommand{\Pol}{\mathcal P{ol}}
\newcommand{\bv}{\mathbf v}
\newcommand{\be}{\mathbf e}
\newcommand{\ba}{\mathbf a}
\newcommand{\bc}{\mathbf c}
\newcommand{\bx}{\mathbf x}
\newcommand{\bu}{\mathbf u}
\newtheorem{theorem}{Theorem}[section]
\newtheorem{result}[theorem]{Result}
\newtheorem{prop}[theorem]{Proposition}
\theoremstyle{definition}
\newtheorem{definition}[theorem]{Definition}
\newtheorem{algorithm}[theorem]{Algorithm}
\newtheorem{example}[theorem]{Example}
\newtheorem{question}[theorem]{Question}
\newtheorem{remark}[theorem]{Remark}
\newcommand{\by}{{\bf y}}
\newcommand{\bz}{{\bf z}}
\newcommand{\TP}{\mathbb{TP}}
\newcommand{\conv}{\text{conv}}
\newcommand{\tconv}{\text{tconv}}
\newcommand{\R}{{\mathbb R}}
\newcommand{\Z}{{\mathbb Z}}
\DeclareMathOperator{\ehr}{ehr}
\DeclareMathOperator{\Ehr}{Ehr}
\DeclareMathOperator{\Vol}{Vol}
\DeclareMathOperator{\vol}{vol}
\DeclareMathOperator{\Todd}{Todd}
\newcommand{\Qa}[1][a]{Q(\mathbf{#1})}
\author{Marie-Charlotte Brandenburg$^1$}
\address{\hspace*{-1.1em}$^1$Max Planck Institute for Mathematics in the Sciences, Leipzig, Germany}
\author{Sophia Elia$^2$}
\address{$^2$Freie Universität Berlin, Berlin, Germany}
\author{Leon Zhang$^3$}
\address{$^3$University of California, Berkeley, USA}
\title{Multivariate volume, Ehrhart, and $h^*$-polynomials of polytropes}
\email{marie.brandenburg@mis.mpg.de, sophiae56@zedat.fu-berlin.de, leonyz@berkeley.edu}
\begin{document}

\begin{abstract}
The univariate Ehrhart and $h^*$-polynomials of lattice polytopes have been widely studied. 
We describe methods from toric geometry for computing multivariate versions of volume, Ehrhart and $h^*$-polynomials of lattice polytropes, which are both tropically and classically convex, and are also known as alcoved polytopes of type $A$. These algorithms are applied to all polytropes of dimensions $2,3$ and $4$, yielding a large class of integer polynomials. We give a complete combinatorial description of the coefficients of volume polynomials of $3$-dimensional polytropes in terms of regular central subdivisions of the fundamental polytope. Finally, we provide a partial characterization of the analogous coefficients in dimension $4$.
\end{abstract}

\maketitle

\section{Introduction}

Polytropes are a fundamental class of polytopes, which masquerade in the literature as alcoved polytopes of type $A$ \cite{LP, LP2}.
Among many others, they include order polytopes, some associahedra and matroid polytopes, hypersimplices, and Lipschitz polytopes.
They are tropical polytopes which are classically convex \cite{JK} and 
are closely related to the notion of Kleene stars and the problem of finding shortest paths in weighted graphs \cite{T, JS}. 
Polytropes also arise in a range of algorithmic applications to other fields, including
phylogenetics \cite{YZZ}, mechanism design \cite{CT}, and building theory \cite{JSY}.

It is well known that computing and approximating the volume of a polytope is ``difficult'' \cite{barany_computingvolumeis}. More specifically, there is no polynomial-time algorithm for the exact computation of the volume of a polytope \cite{dyer_computingvolumeconvex}, even when restricting to the class of polytopes defined by a totally unimodular matrix.
However, viewing polytropes as the ``building blocks'' of tropical polytopes, understanding their volumes provides insight into the volume of tropical polytopes. Determining whether the volume of such a tropical polytope is zero is equivalent to deciding whether a mean payoff game is winning \cite{akian_tropicalpolyhedraare}. The volume of a tropical polytope can hence serve as a measurement of how far a game is from being winning \cite{GM}.

Unimodular triangulations of polytropes were studied in the language of affine Coxeter arrangements in \cite{LP}, producing a volume formula and non-negativity of the $h$-vector corresponding to the triangulation.
Motivated by a novel possibility for combining algebraic methods with enumerative results from tropical geometry, we continue to study the volume of polytropes, both continuously and discretely. 
The Ehrhart counting function encodes the discrete volume by counting the number of lattice points in any positive integral dilate of a polytope. For lattice polytopes, this counting function is given by a univariate polynomial, the Ehrhart polynomial, with leading term equal to the Euclidean volume of the polytope. Rewriting the Ehrhart polynomial in the basis of binomial coefficients determines the $h^*$-polynomial and reveals additional beautiful connections between the coefficients and the geometry of the polytope. 
 It is an area of active research to determine the relations between the $h^*$-coefficients of alcoved polytopes \cite[Question 1]{SchepersVanLang}; for example, it is conjectured that the $h^*$-vectors of alcoved polytopes of type $A$ are unimodal.

In recent work, Loho and Schymura \cite{LS} developed a separate notion of volume for tropical polytopes driven by a tropical version of dilation, which yields an Ehrhart theory for a new class of tropical lattices. This notion of volume is intrinsically tropical and exhibits many natural properties of a volume measure, such as being monotonic and rotation-invariant. Nevertheless, the discrete and classical volume can be more relevant for certain applications; for example, the irreducible components of a Mustafin variety correspond to the lattice points of a certain tropical polytope \cite{CHSW, Z}.

We pass from univariate polynomials to multivariate polynomials to push the connections between the combinatorics  of the polynomials and the geometry even further.
Combinatorial types of polytropes have been classified up to dimension $4$ \cite{T, JS}. 
Each polytrope of the same type has the same normal fan. 
Given a normal fan, we create multivariate polynomial functions in terms of the rays that yield the (discrete) volume and $h^*$-evaluation for any polytrope of that type. 
We first use algebraic methods to compute the multivariate volume polynomials, following the algorithm in \cite{DLS}. We then transform these polynomials into multivariate Ehrhart polynomials, which are highly related to vector partition functions, using the Todd operator. Finally we perform the change of basis to recover the $h^*$-polynomials. 

\begin{result}
We compute the multivariate volume, Ehrhart, and $h^*$-polynomials for all types of polytropes of dimension~$\leq 4$. 
\end{result}
Furthermore, these methods could be extended to higher dimensions with increased computation power.
Our code and the resulting polynomials are publicly available on a Github repository\footnote{\url{https://github.com/mariebrandenburg/polynomials-of-polytropes}
}.

Each combinatorial type of polytrope of dimension $n-1$ corresponds to a certain triangulation of the fundamental polytope $FP_{n}$, the polytope with vertices $e_i-e_j$ for $i,j\in [n]$ \cite{JS}. Our computations show that the volume polynomials of polytropes of dimension $3$ have integer coefficients with a strong combinatorial meaning:

\begin{theorem}
 The coefficients of the volume polynomials of maximal $3$-dimensional polytropes reflect the combinatorics of the corresponding regular central subdivision of $FP_3$.
\end{theorem}
For example, each coefficient of a monomial of the form $a_{ij}a_{kl}a_{st}$ is either $6$ or $0$. This reflects whether the vertices $e_i-e_j, e_k - e_l$ and $e_s-e_t$ form a face in the triangulation of $FP_4$ or not. Similarly, the coefficient of the monomial $a_{ij}^2a_{kl}$ is $-3$ if the vertex $e_k - e_l$ is incident to a triangulating edge of a square facet of $FP_3$ and $0$ otherwise. 
These intriguing observations naturally lead to a question of generalization.

\begin{question}
 How do the coefficients of the volume polynomials of maximal $(n-1)$-dimensional polytropes reflect the combinatorics of the corresponding regular central subdivision of $FP_n$?
\end{question}
To emphasize this question, we show that our data of volume polynomials of dimension $4$ is highly structured:

\begin{theorem}
In the 8855-dimensional space of homogeneous polynomials of degree 4, the 27248 normalized volume polynomials of 4-dimensional polytropes span a 70-dimensional affine subspace.
\end{theorem}

Finally, we present a partial characterization of the coefficients of these polynomials. For example, the coefficient of a monomial of the form $a_{ij}a_{ik}$ is always either $0$ or $6$, and the sum of all coefficients of this form is always $300$, in each of the $27248$ polynomials. 

 \subsection*{Overview}

In this article we describe methods for computing the multivariate volume, Ehrhart, and $h^*$-polynomials for all polytropes.  We begin by describing the Ehrhart theory, tropical geometry, and algebraic geometry necessary for these methods in \Cref{sec:background}. 
In \Cref{sec:methods}, we describe our methods and apply them to 2-dimensional polytropes.
In Section \ref{sec:compuatations}, we apply these methods to compute the volume, Ehrhart, and $h^*$-polynomials of polytropes of dimension 3 and 4. We give a complete description of the coefficients of volume polynomials of 3-dimensional polytropes in terms of regular central subdivisions of the fundamental polytope, and give a partial characterization of these coefficients in dimension $4$. 

\subsection*{Acknowledgements}

The authors thank Christian Haase, Michael Joswig, Benjamin Schr\"oter, Rainer Sinn, and Bernd Sturmfels for many helpful conversations, and the anonymous referees for input that greatly improved this article. They also thank Michael Joswig, Lars Kastner, and Benjamin Schr\"oter for providing a dataset for 4-dimensional polytropes, and the Max Planck Institute for Mathematics in the Sciences for its hospitality while working on this project. Leon Zhang was partially supported by an NSF Graduate Research Fellowship. Sophia Elia was supported by the Deutsche Forschungsgemeinschaft (DFG)
Graduiertenkolleg “Facets of Complexity”.

\section{Background}
\label{sec:background}

In this section we give a brief overview of the background material we use for our results. Note that throughout this article, we assume that $P$ is a lattice polytope unless stated otherwise.

\subsection{Ehrhart theory}
\label{subsec:ehrhart}
In this subsection, we recall some essentials of Ehrhart theory.
For more details, we refer the reader to \cite{BR}.

A \emph{lattice polytope} $P\subseteq \mathbb{R}^n$ is the convex hull of a finite set of points in $\mathbb{Z}^n$. 
Equivalently, lattice polytopes are bounded intersections of finitely many closed half-spaces:
$P = \{ \mathbf x \in \R ^n : A \mathbf x \leq \mathbf{a} \}$ for some 
$A \in \ZZ ^{m \times n}$ and $\mathbf a \in \ZZ ^m$.
The \emph{Ehrhart counting function} of $P$, written $\ehr_P(k)$, gives the number of lattice points in the $k$-th dilate of $P$ for $k \in \mathbb Z _{\geq 1}$: $$\ehr_P(k)= |kP \cap \mathbb Z ^n| = |\{ \mathbf x \in \ZZ ^n : A \mathbf x \leq k\mathbf{a} \}|.  $$
Ehrhart's theorem \cite{Ehrhart} says that for positive integers, $\ehr_P(k)$ agrees with a polynomial in $k$ of degree equal to the dimension of $P$. Furthermore, the constant term of this polynomial is equal to 1 and the coefficient of the leading term is equal to the Euclidean volume of $P$ within its affine span. The interpretation of other coefficients of the Ehrhart polynomial is an active direction of research.

Generating functions play a central role in Ehrhart theory. The \emph{Ehrhart series} $\Ehr_P(t)$ of a polytope $P$ is the formal power series given by $$\Ehr_P(t) = 1 + \sum_{k \geq 1} \ehr_P(k)t^k. $$ 
For a $d$-dimensional lattice polytope, the Ehrhart series has the rational expression
$$\Ehr_P(t)=1 + \sum_{k \geq 1}\ehr_P(k)t^k = \frac {h^\ast_P(t)}{(1-t)^{d+1}} ,$$
where $h^*_P(t) = \sum_{i=0}^d h_i^* t ^i $ is a polynomial in $t$ of degree at most $d$, called the \emph{$h^*$-polynomial}. Furthermore, each $h_i^*$ is a non-negative integer \cite{stanley1980decompositions}. The coefficients of the $h^*$-polynomial form the \emph{$h^*$-vector}: $(h^*_0,h^*_1,\dots, h_d^*)$.

The \emph{normalized volume} of $P$ is defined as $\Vol(P)=\dim(P)! \vol(P),$ where $\vol(P)$ is the Euclidean volume of $P$ within its affine span. It is equal to the sum of the coefficients of the $h^*$-polynomial. The Ehrhart polynomial may be recovered from the $h^\ast$-vector through the transformation
$$\ehr_P(k) = \sum_{i=0}^{d}h_i^*  \binom{k+d -i}{d}.$$

For a lattice polytope $P = \{ x \in \RR^n : A\bx \leq \mathbf b\}$ with $A \in \ZZ^{m \times n},\, \mathbf b \in {\ZZ^m}$,
the \emph{multivariate Ehrhart counting function} of $P$, $\ehr_P(\ba) :\ZZ^m \rightarrow \ZZ$, gives the number of lattice points in the vector dilated polytope: 
$$\ehr_P(\ba) = |\{ x \in \ZZ^n : A\bx \leq \mathbf \ba\} |.$$
This counting function is closely related to vector partition functions, which can be used to show that $\ehr_P(\ba)$ is piecewise-polynomial \cite{DM1988}.
Vector partition functions and the related Ehrhart theory have been widely studied, see, for example \cite{vector_partition_functions,HenkLinke}.

\subsection{Tropical convexity}
In this subsection we review some basics of tropical arithmetic and tropical convexity. We refer readers to \cite{DS} or \cite{J2} for a more detailed exposition.

Over the min-plus tropical semiring $\mathbb T = (\mathbb R\cup \{\infty\}, \oplus, \odot)$ we define for $a,b\in \mathbb T$ the operations of addition $a\oplus b$ and multiplication $a\odot b$ by
$$a \oplus b = \min(a, b),\ \  a \odot b = a + b.$$
We can similarly define vector addition and scalar multiplication: for any scalars $a,b \in \mathbb T$ and for any vectors $\mathbf v = (v_1,
\ldots ,v_n), \mathbf w= (w_1, \ldots , w_n) \in \mathbb T^n$, we define
$$a \odot \mathbf v = (a + v_1, a + v_2, \ldots ,a + v_n),$$
$$a \odot \mathbf v \oplus b \odot \mathbf w = (\min(a+v_1,b+w_1), \ldots, \min(a+v_n,b+w_n)).$$
Let $V=\{\mathbf v_1,\dots,\mathbf v_r\}\subseteq \R^n$ be a finite set of points. The \emph{tropical convex hull} of $V$ is given by the set of all tropical linear combinations 
\[
	\tconv(V) = \{ a_1 \odot \mathbf v_1 \oplus \dots \oplus a_r \odot \mathbf v_r |~
	a_1,\dots,a_r \in \R \}.
\]
A tropically convex set in $\R^n$ is closed under tropical scalar multiplication. As a consequence, we can identify a tropically convex set $P$ contained in $\R^n$ with its image in the \emph{tropical projective torus} $ \TP^{n-1}=\R^n/(\R\odot(1,\dots,1))$.
A \emph{tropical polytope} is the tropical convex hull of a finite set $V$ in $\TP^{n-1}$. A \emph{tropical lattice polytope} is a tropical polytope whose spanning points are all contained in $\Z^n$.
Let $P~=~\tconv(V)\subseteq \TP^{n-1}$ be a tropical polytope. The \emph{(tropical)} \emph{type} of a point $\mathbf x$ in $\TP^{n-1}$ with respect to $V$ is the collection of sets $S = (S_1,\dots, S_{ n })$, where an index $i$ is contained in $S_j$ if
\[(\bv_{i})_j - x_j = \min((\bv_i)_1-x_1,\dots, (\bv_{i})_{ n }-x_{ n }).\]

Geometrically, we can view the type of $\mathbf x$ as follows: 
a \emph{max-tropical hyperplane} $H_{\mathbf a} \subseteq \TP ^{n-1}$ with \emph{apex} at $\mathbf a\in \TP^{n-1}$
is the set of points $\by \in \TP^{n-1}$ such that the maximum of $\{a_i + y_i : i \in [n] \}$ is attained at least twice. The max-tropical hyperplane $H_{\mathbf 0}$ induces a complete polyhedral fan $\mathcal F_{\mathbf 0}$ in$~\TP^{n-1}$. For each $i\in [r]$, let $H_i$ be a max-tropical hyperplane with apex $\bv_i$. Each of these hyperplanes determines a translate $\mathcal F_{H_i}$ of $\mathcal F_{\mathbf 0}$. Two points $\mathbf x,\mathbf y \in \TP^{n-1}$ lie in the same face of  $\mathcal F_{H_i}$ if and only if $\mathbf v_i - \mathbf x$ and $\mathbf v_i - \mathbf y$ achieve their minima in the same set of coordinates.
For a point $\bx\in \TP^{n-1}$ with type $S=(S_1,\dots,S_n)$, the set $S_j$ records for which hyperplanes $H_i$ the point $\bx$ lies in a face of $\mathcal F_{H_i}$ such that $\bv_i -\bx$ is minimal in coordinate $j$. \Cref{fig:types-regions} shows when $i$ is contained in $S_j$ based on the position of $\bx$ in $\TP^{2}$.

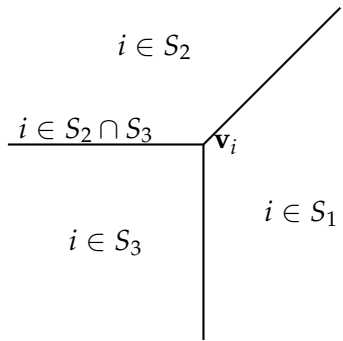
\begin{figure}[t]
    \centering

    \begin{tikzpicture}[scale=1.3]
	\draw[thick] (-2, 0) -- (0,0) -- (1.4,1.4);
	\draw[thick] (0,0) -- (0,-2);
	\node[anchor=west] at (0,0) {$\bv_i$};
	\node at (-1,-1) {$i \in S_3$};
	\node at (1,-.7) {$i \in S_1$};
	\node at (-.5,1) {$i \in S_2$};
	\node at (-1.2,.15) {$i\in S_2\cap S_3$};
\end{tikzpicture}\hspace{1cm}
    \caption{
    The max-tropical hyperplane $H_i \subseteq \TP^2$
    in the chart where the third coordinate is $0$, with faces labeled for type identification.
    }
    \label{fig:types-regions}
\end{figure}

The tropical polytope $ P$ consists of all points $\mathbf x$ whose type $S = (S_1,\dots, S_{n})$ has all $S_i$ nonempty. These are precisely the bounded regions of the subdivision of $\TP^{n-1}$ induced by the max-tropical hyperplanes $H_1,\dots H_r$ , as illustrated in \Cref{fig:trop-polytope}. Each collection of points with the same type is called a \emph{cell}. Each cell with all $S_i$ nonempty is a \emph{polytrope}: a tropical polytope that is classically convex \cite{JK}. In this way all tropical polytopes have a decomposition into polytropes. 
A tropical polytope $P$ has a unique minimal set of points $V$ such that $P = \tconv(V)$ \cite[Prop. 21]{DS}.
If $P$ is itself a polytrope, then
there is a unique maximal cell whose type with respect to $V$ is said to be the \emph{type} of the polytrope$~P$, a labeled refinement of the unlabeled combinatorial type of $P$ as a polytope.

\begin{figure}[h!]
\begin{tikzpicture}[scale=1.15]
\draw[fill=black!20, very thick] (0,-1) -- (1,-1) -- (2,0) -- (4,0) -- (5,1) -- (5,4) -- (3,4) -- (0,1) -- (0,-1);
\draw[fill=black] (4,0) circle (0.5mm);
\draw[fill=black] (5,4) circle (0.5mm);
\draw[fill=black] (1,-1) circle (0.5mm);
\draw[fill=black] (0,1) circle (0.5mm);
\node[anchor = north] at (4,0) {$(4,0,0)$};
\node[anchor = east] at (0,1) {$(0,1, 0)$};
\node[anchor = north] at (1,-1) {$(2,2,0)$};
\node[anchor = west] at (5,4) {$(5,4,0)$};
\end{tikzpicture}
\begin{tikzpicture}[scale = 1.15]
\draw[fill=black!20, very thick] (0,-1) -- (1,-1) -- (2,0) -- (4,0) -- (5,1) -- (5,4) -- (3,4) -- (0,1) -- (0,-1);
\draw[thick, dashed] (-.5, 0) -- (4,0) -- (5.4,1.4) -- (4,0) -- (4,-1.5);
\draw[thick, dashed] (-.5, 4) -- (5,4) -- (5.4,4.4) -- (5,4) -- (5,-1.5);
\draw[thick, dashed] (-.5,-1) -- (1,-1) -- (5.4,3.4) -- (1,-1) -- (1,-1.5);
\draw[thick, dashed] (-.5, 1) -- (0,1) -- (2.4,3.4) -- (0,1) -- (0,-1.5);
\draw[fill=black] (4,0) circle (0.5mm);
\draw[fill=black] (5,4) circle (0.5mm);
\draw[fill=black] (1,-1) circle (0.5mm);
\draw[fill=black] (0,1) circle (0.5mm);
\node[anchor = north west] at (4,0) {$\mathbf{4}$};
\node[anchor = south east] at (0,1) {$\mathbf{1}$};
\node[anchor = north west] at (1,-1) {$\mathbf{2}$};
\node[anchor = west] at (5,4) {$\mathbf{3}$};
\node at (2,1.5) {\scriptsize{$\{1\}, \{2,4\}, \{3\}$}};
\node at (3.5,.5) {\scriptsize{$\{1,2\}, \{4\}, \{3\}$}};
\node at (.8,-.25) {\scriptsize{$\{1\},\{2\},\{3,4\} $}};
\end{tikzpicture}    
\caption{A 2-dimensional tropical polytope in $\mathbb{TP}^2$ spanned by four vertices, pictured in the chart where the last coordinate is 0, and its decomposition into three polytropes, labeled with their types.}
\label{fig:trop-polytope}
\end{figure}
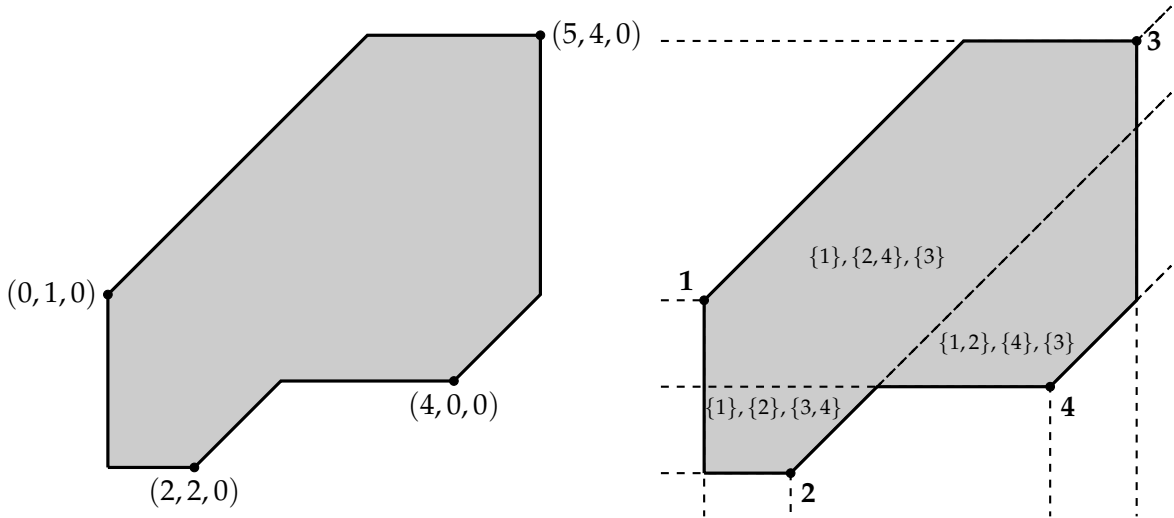

\subsection{Polytropes}
\label{subsec:background_polytropes}

We now delve deeper into a discussion of polytropes, reviewing certain results of \cite{T} and \cite{JS}.

Let $\bc$ be a vector in $\mathbb R^{n^2-n}$. We can identify $\bc$ with an $n\times n$ matrix having zeros along the diagonal. Under this identification, $\bc$ describes weights on the edges of a complete directed graph with $n$ vertices. The entry $c_{ij}$ represents the weight of the edge going from vertex $v_i$ to vertex $v_j$.

\begin{example}
Let $n=3$ and consider the vector $\bc=\left(3,2,3,4,5,6\right)\in\R^6.$ We view $\bc$ as the $3\times 3$ matrix 
    $$ \bc = \begin{pmatrix} 
        0 & 3 & 2 \\
        3 & 0 & 4 \\
        5 & 6 & 0 
    \end{pmatrix},$$ 
where each off-diagonal entry represents the weight of an edge in a complete directed graph on $3$ vertices, as shown in \Cref{fig:example_directed_graph}.

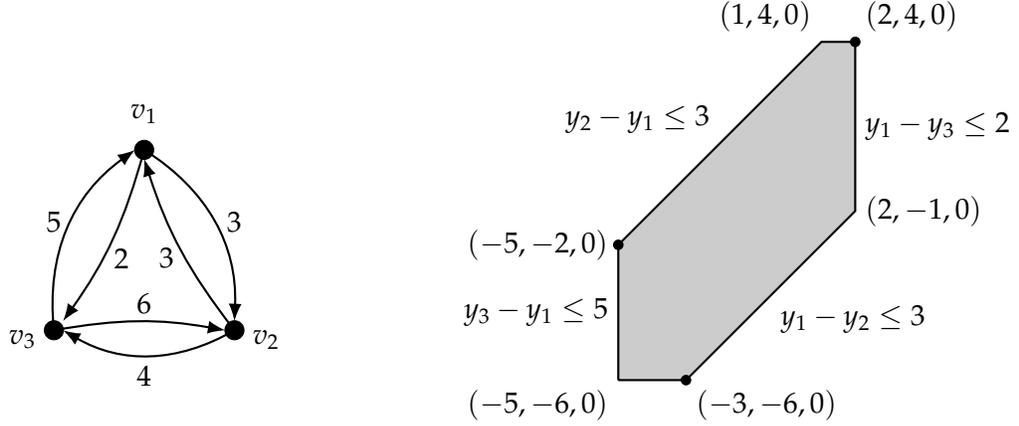
\begin{figure}
    \centering
\begin{tikzpicture}[scale=1.2]
	\draw[fill=black] (0,1) circle (3pt); 
	\draw[fill=black] (1,-1) circle (3pt); 
	\draw[fill=black] (-1,-1) circle (3pt); 
	
	\coordinate (v1) at (0,1);
	\coordinate (v2) at (1,-1);
	\coordinate (v3) at (-1,-1);

	\node at (0,1.4) {$v_1$};
	\node at (1.35, -1.1) {$v_2$};
	\node at (-1.35, -1.1) {$v_3$};
	
	\draw [thick, >=Latex, ->] (v1) to [bend left] (1,-1+0.1); 
	\draw [thick, -Latex] (v1) to [bend left=10] (-1+0.1,-1+0.1); 
	\draw [thick, -Latex] (v2) to [bend left=10] (0,1-0.1); 
	\draw [thick, -Latex] (v2) to [bend left] (-1+0.1,-1); 
	\draw [thick, -Latex] (v3) to [bend left] (0-0.1,1); 
	\draw [thick, -Latex] (v3) to [bend left=10] (1-0.1,-1); 
	
	\node at (1,.2) {$3$}; 
	\node at (-.25,-.2) {$2$}; 
	\node at (.25,-.2) {$3$}; 
	\node at (0,-1.5) {$4$}; 
	\node at (-1,.2) {$5$}; 
	\node at (0,-.7) {$6$}; 
	
	\node at (-3.5,-2) {};

\end{tikzpicture}     \hspace{2cm}
\begin{tikzpicture}[scale=.45]
    \draw[fill=black!20,thick] (-5,-6)--(-3,-6)--(2,-1)--(2,4)--(1,4)--(-5,-2)--(-5,-6);
    \node[anchor=north east] at (-5,-6) {$(-5,-6,0)$};
    \node[anchor= north west] at (-3,-6) {$(-3,-6,0)$};
    \node[anchor=west] at (2,-1) {$(2,-1,0)$};
    \node[anchor=south west] at (2,4) {$(2,4,0)$};
    \node[anchor=south east] at (1,4) {$(1,4,0)$};
    \node[anchor=east] at (-5,-2) {$(-5,-2,0)$};
    \draw[fill=black] (-3,-6) circle (4pt);
    \draw[fill=black] (2,4) circle (4pt);
    \draw[fill=black] (-5,-2) circle (4pt);
    
    \node[anchor=west] at (2,1.5) {$y_1 - y_3 \leq 2$};
    \node[anchor=north west] at (-.5,-3.5) {$y_1-y_2\leq 3$};
    \node[anchor=east] at (-5,-4) {$y_3 - y_1 \leq 5$};
    \node[anchor=south east] at (-2,1) {$y_2 - y_1 \leq 3 $};
\end{tikzpicture}

     \caption{The complete directed graph and polytrope $Q$ corresponding to the Kleene star $\bc=(3,2,3,4,5,6)$. The polytrope $Q$ is pictured in the chart where the last coordinate is zero.}
    \label{fig:example_directed_graph}
\end{figure}
\end{example}

We define $\mathcal R_n\subseteq \mathbb R^{n^2-n}$ to be the set of all vectors $\bc$ with no negative cycles in the corresponding weighted graph. The \emph{Kleene star} $\bc^*\in\R^{n\times n}$ of $\bc$ is the matrix such that $\bc_{ij}^*$ is the weight of the lowest-weight path from $i$ to $j$. It can be computed as the $(n-1)$th tropical power $\bc^{\odot (n-1)}$. 
Since $\bc$ has no negative cycles, $\bc^*$ is zero along the diagonal, and we can again identify $\bc^*$ with a vector in $\R^{n^2-n}$. 
The \emph{polytrope region} $\Pol_n \subseteq \mathcal R_n\subseteq\RR^{n^2-n}$ is the closed cone given by
\[\Pol_n = \{\bc\in \mathcal R_n: \bc=\bc^*\}.\]
Points in the polytrope region correspond to weighted graphs whose edges satisfy the triangle inequality. As the name suggests, the polytrope region parametrizes the set of all polytropes:

\begin{prop}[{\cite[Th. 1]{dlP}, \cite[Prop. 13]{T}}]\label{prop:polytrope_vertex_hyperplane_description}
Let $P\subseteq \TP^{n-1}$ be a non-empty set. The following statements are equivalent:
\begin{enumerate}
    \item $P$ is a polytrope.
    \item There is a matrix $\mathbf c\in\Pol_n$ such that $P=\tconv(\mathbf c)$, where the columns of the matrix $\mathbf c$ are taken as a set of $n$ points in $\TP^{n-1}$.
    \item There is a matrix $\mathbf c\in\Pol_n$ such that \[P=\{y\in\R^{n}\mid y_i - y_j \leq c_{ij}, y_{n}=0\}.\]
\end{enumerate}
Furthermore, the $\mathbf c$'s in the last two statements are equal, and are uniquely determined by $P$.
\end{prop}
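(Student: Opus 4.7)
The plan is to establish $(2)\Leftrightarrow(3)$ first (identifying the matrix $\bc$ in both), deduce $(3)\Rightarrow(1)$ directly, and then tackle the harder direction $(1)\Rightarrow(3)$.

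For $(3)\Rightarrow(2)$, suppose $P=\{y\in\R^n : y_i-y_j\leq c_{ij},\ y_n=0\}$ for some $\bc\in\Pol_n$. I claim $P=\tconv(\bc)$, where the columns of $\bc$ are regarded as points in $\TP^{n-1}$. For any $y\in P$, the identity $y=\bigoplus_{j=1}^n y_j\odot \bc_{\cdot j}$ holds because its $i$-th coordinate is $\min_j(y_j+c_{ij})$, which attains $y_i$ at $j=i$ (since $c_{ii}=0$) and satisfies $y_j+c_{ij}\geq y_i$ for every $j$ by the defining inequality. Conversely, for any tropical combination $y=\bigoplus_j a_j\odot\bc_{\cdot j}$ and indices $i,k$, if $y_k=a_{j^*}+c_{kj^*}$ attains the defining minimum, then $y_i-y_k\leq c_{ij^*}-c_{kj^*}$; the Kleene star condition $\bc=\bc^*$ supplies the triangle inequality $c_{ij^*}\leq c_{ik}+c_{kj^*}$, so $y_i-y_k\leq c_{ik}$, confirming $y\in P$.

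For $(3)\Rightarrow(1)$, the set $P$ is classically convex as an intersection of half-spaces and non-empty because $\bc\in\mathcal R_n$; it is tropically convex because, given $u,v\in P$ and a tropical combination $w=a\odot u\oplus b\odot v$, whichever of the two terms attains the minimum in coordinate $k$, say $w_k=a+u_k$, forces $w_i\leq a+u_i$ and hence $w_i-w_k\leq u_i-u_k\leq c_{ik}$.

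The main obstacle is $(1)\Rightarrow(3)$: showing that every polytrope is cut out by inequalities of the form $y_i-y_j\leq c_{ij}$. Since $P$ is classically convex it admits a finite facet description, and the task is to argue that each facet normal must be a coordinate difference $\be_i-\be_j$. I would exploit tropical convexity by observing that any facet-supporting hyperplane must be preserved under tropical combinations of points of $P$ lying on that facet, which forces the outward normal to lie in the cone spanned by the differences $\be_i-\be_j$; a sharpening then pins the normal down to a single such difference. For the uniqueness of $\bc$ and the matching of $\bc$'s in $(2)$ and $(3)$: equality is immediate from the proof of $(3)\Rightarrow(2)$, which uses the same matrix for both descriptions, and uniqueness follows because among all $\tilde\bc\in\mathcal R_n$ whose inequalities cut out $P$, the Kleene star $\tilde\bc^*$ tightens each $\tilde c_{ij}$ to the shortest-path weight — an intrinsic invariant of $P$ — so imposing $\bc=\bc^*$ picks out the unique representative with $c_{ij}=\tilde c^*_{ij}$.
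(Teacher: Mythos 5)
This proposition is quoted from \cite{T} (Prop.~13) and the paper gives no proof of its own, so there is nothing internal to compare against; I am judging your argument on its own terms. The easy directions are done correctly: your verification that $\tconv(\bc)=\{y\mid y_i-y_j\le c_{ij},\ y_n=0\}$ for $\bc\in\Pol_n$ (minimum attained at $j=i$ because $c_{ii}=0$, reverse containment via the triangle inequality $c_{ij^*}\le c_{ik}+c_{kj^*}$ from $\bc=\bc^*$) and your check of tropical and classical convexity in $(3)\Rightarrow(1)$ are exactly the standard computations and are sound.

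The problem is that $(1)\Rightarrow(3)$ --- the substantive content of the proposition --- is not proved. You correctly identify it as ``the main obstacle'' and then only describe what you \emph{would} do. Moreover, the intermediate claim in your sketch is vacuous: the cone generated by \emph{all} differences $\be_i-\be_j$ (both orientations for each pair) is the entire hyperplane $\{x\mid\sum x_i=0\}$, so ``the outward normal lies in the cone spanned by the $\be_i-\be_j$'' carries no information, and the unexplained ``sharpening'' to a single difference is the whole argument. The standard route is to set $c_{ij}=\max_{y\in P}(y_i-y_j)$ (finite since a tropical polytope is compact), note $P\subseteq\{y\mid y_i-y_j\le c_{ij}\}$ trivially, and then use closure of $P$ under coordinatewise minimum (the tropical combination $0\odot u\oplus 0\odot v$) together with classical convexity to establish the reverse containment; none of this appears in your write-up. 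The uniqueness argument also has a gap: you assert that the Kleene star $\tilde\bc^*$ of \emph{any} $\tilde\bc$ whose inequalities cut out $P$ is ``an intrinsic invariant of $P$,'' but that is precisely what needs proof, since different $\tilde\bc$'s defining the same $P$ could a priori have different Kleene stars. The clean fix is to show that whenever $\bc\in\Pol_n$ and $P=\{y\mid y_i-y_j\le c_{ij},\ y_n=0\}$, every inequality is tight, i.e.\ $c_{ij}=\max_{y\in P}(y_i-y_j)$ (exhibit the point $y_l=c_{lj}-c_{nj}$, which lies in $P$ by the triangle inequality and attains $y_i-y_j=c_{ij}$); this determines $\bc$ from $P$ and yields uniqueness in both $(2)$ and $(3)$ simultaneously.
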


Note in particular that polytropes in $\TP^{n-1}$ are tropical simplices, i.e. the tropical convex hull of exactly $n$ points. A polytrope of dimension $n-1$ is \emph{maximal} if it has $\binom{2n-2}{n-1}$ vertices as an ordinary polytope. To see why this is indeed the maximal number of classical vertices, we note that a polytrope is dual to a regular subdivision of the product of simplices $\Delta_{n-1}\times\Delta_{n-1}$. The normalized volume of this polytope is $\binom{2n-2}{n-1}$, bounding the number of maximal cells in the regular subdivision and hence the number of vertices of the polytrope. This bound is attained in every dimension \cite[Proposition 19]{DS}.

Let $R$ be the polynomial ring $R=\R[x_{ij} \mid (i,j)\in [n]^2,\ i\neq j]$. Given a vector $\bv=(v_{12},\dots, v_{n(n-1)})$ contained in $\mathbb{N}^{n^2 - n}$, we write $\bx^\bv$ for the monomial $\prod x_{ij}^{v_{ij}}$. A vector $\bc\in\mathbb R^{n^2-n}$ determines a partial ordering $>_\bc$ on the monomials of $R$, where monomials are compared using the dot product of their exponent vector with $\bc$, i.e. $\bx^\bu {>_\bc} \bx^\bv$ if $\bu\cdot \bc > \bv\cdot \bc$. Given a polynomial $f=\sum_\bv \alpha_\bv \bx^\bv$, some of its monomial terms will be maximal with respect to this partial ordering. We define the initial term $in_\bc(f)$ to be the sum of all such maximal terms of $f$. The initial ideal $in_\bc(I)$ of an ideal $I\subseteq R$ is generated by all initial terms $in_\bc(f)$ for $f\in I$. In general, a generating set $\{f_i\}$ for the ideal $I$ need not satisfy $\langle in_{\bc}(f_i)\rangle = in_\bc (I)$. If $\langle in_{\bc}(f_i)\rangle = in_\bc (I)$ does hold, we call the generating set $\{f_i\}$ a \emph{Gr\"obner basis} of $I$ with respect to the weight vector $\bc$.
For a more detailed introduction to Gr\"obner bases, see \cite{coxlittleoshea}.

We consider the toric ideal
\[ I 
= \langle x_{ij}x_{ji}-1, x_{ij}x_{jk}-x_{ik}\mid i, j, k \in [n] \text{ pairwise distinct}\rangle
,\]
which appears in \cite{T} as the toric ideal associated with the all-pairs shortest path program. 
Let $\bc \in \mathbb R^{n^2-n}$. The \emph{Gr\"obner cone} $\mathcal C_\bc(I)$ is given by
\[\mathcal C_\bc(I) = \{\bc'\in \
\mathbb R^{n^2-n}: in_{\bc'}(I) = in_\bc(I)\} .\]
This is a closed, convex polyhedral cone. The collection of all such cones is a polyhedral fan, the \emph{Gr\"obner fan} $\mathcal{GF}_n$ of the ideal $I$. Let $\mathcal{GF}_n|_{\mathcal Pol_n}$ be the restriction of the Gr\"obner fan of $I$ to the polytrope region $\mathcal Pol_n$. This polyhedral fan captures the tropical types of polytropes:

\begin{theorem}[{\cite[Th. 17 - 18]{T}}]\label{th:polytropes:comb:types}
Cones of $\mathcal{GF}_n|_{\mathcal Pol_n}$ are in bijection with tropical types of polytropes in $\TP^{n-1}$. Open cones of $\mathcal{GF}_n|_{\mathcal Pol_n}$ are in bijection with types of maximal polytropes in $\TP^{n-1}$.
\end{theorem}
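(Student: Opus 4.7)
The plan is to exhibit a common combinatorial parameterization for both sides of the claimed bijection. Both the tropical type of a polytrope $P_\bc$ and the initial ideal $in_\bc(I)$ should, for $\bc \in \Pol_n$, be captured by the same data: namely the \emph{tight triple pattern} $T(\bc) = \{(i,j,k) \in [n]^3 : c_{ij} + c_{jk} = c_{ik}\}$, which records exactly the equality cases of the triangle inequalities satisfied by any Kleene star.

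First I would verify the polytrope side. By \Cref{prop:polytrope_vertex_hyperplane_description}(3), $P_\bc$ has halfspace description $y_i - y_j \leq c_{ij}$, so its face structure is determined by which of these inequalities become equalities on intersections of facets. Combining two tight inequalities $y_i - y_j = c_{ij}$ and $y_j - y_k = c_{jk}$ yields $y_i - y_k = c_{ij} + c_{jk}$, which can coincide with the bound $c_{ik}$ only when $(i,j,k) \in T(\bc)$. Under the duality between tropical type and the regular subdivision of $\Delta_{n-1}\times \Delta_{n-1}$ induced by $\bc$, this confirms that $T(\bc)$ is a complete invariant of the tropical type of $P_\bc$.

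Next I would verify the Gröbner side. For each generator $x_{ij}x_{jk} - x_{ik}$ of $I$, the Kleene-star condition $\bc \in \Pol_n$ yields $c_{ij} + c_{jk} \geq c_{ik}$, so $in_\bc(x_{ij}x_{jk} - x_{ik})$ equals $x_{ij}x_{jk}$ when $(i,j,k) \notin T(\bc)$ and equals the full binomial otherwise; the generators $x_{ij}x_{ji} - 1$ have constant initial term in $\bc$-weight. Thus the initial forms of the given generators depend only on $T(\bc)$. The \textbf{main obstacle} is showing that these generators actually form a Gröbner basis with respect to $\bc$; otherwise $in_\bc(I)$ could strictly contain the ideal generated by those initial forms, and the correspondence might fail to be a bijection. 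I would address this via Buchberger's criterion: compute the S-polynomials of pairs of generators, using the structural identity that shortest-path composition is associative (so that triple products like $x_{ij}x_{jk}x_{kl}$ reduce unambiguously modulo $I$ to $x_{il}$), and verify that each S-polynomial has a standard expression reducing to zero with respect to $>_\bc$. This is precisely where the hypothesis $\bc \in \Pol_n$ becomes essential: outside $\Pol_n$, some triangle inequality is reversed and the argument breaks.

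With $in_\bc(I)$ shown to be determined by $T(\bc)$, the bijection for the first assertion follows: two vectors $\bc, \bc' \in \Pol_n$ lie in the same Gröbner cone iff they share the same initial ideal iff $T(\bc) = T(\bc')$ iff $P_\bc$ and $P_{\bc'}$ have the same tropical type. For the second assertion, an open cone of $\mathcal{GF}_n|_{\Pol_n}$ corresponds to the generic case where $T(\bc)$ contains no nontrivial triples, i.e.\ every triangle inequality is strict. By the dimension count recalled after \Cref{prop:polytrope_vertex_hyperplane_description}, this is precisely the situation in which the max-tropical hyperplanes at the vertices of $P_\bc$ cut out the maximum number $\binom{2n-2}{n-1}$ of classical vertices, so open cones biject with maximal polytropes.
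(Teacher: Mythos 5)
This theorem is quoted from Tran \cite{T} and the paper supplies no proof of its own, so I can only assess your argument on its merits. Unfortunately, its central idea does not work: the tight-triple pattern $T(\bc)=\{(i,j,k):c_{ij}+c_{jk}=c_{ik}\}$ is strictly coarser than both sides of the bijection. The set of $\bc\in\Pol_n$ with $T(\bc)=\emptyset$ is the interior of the polytrope region, a single connected cone; if $T(\bc)$ were a complete invariant of either the tropical type or the initial ideal, there would be exactly one type of maximal polytrope in every dimension. But already for $n=4$ there are $6$ maximal types (and $27248$ for $n=5$), all realized by weight vectors with every triangle inequality strict. So your final paragraph, which identifies ``open cone'' with ``$T(\bc)$ trivial,'' cannot give a bijection with maximal types, and your first claim that $T(\bc)$ determines the tropical type fails for the same reason.

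The gap is visible on the Gr\"obner side as well: the quadratic generators $x_{ij}x_{ji}-1$, $x_{ij}x_{jk}-x_{ik}$ are \emph{not} a Gr\"obner basis for $I$ with respect to a generic $\bc$ in the interior of $\Pol_n$ once $n\ge 4$. The ideal $I$ also contains binomials such as $x_{ij}x_{kl}-x_{il}x_{kj}$ for four distinct indices (both monomials map to $t_it_k/(t_jt_l)$ on the torus parametrized by $I$). When all triangle inequalities are strict, neither monomial is divisible by any initial form $x_{ij}x_{jk}$ or $x_{ij}x_{ji}$ of your proposed basis, so this binomial contributes a new generator to $in_\bc(I)$ whose identity depends on the sign of $c_{ij}+c_{kl}-c_{il}-c_{kj}$ --- data not recorded by $T(\bc)$. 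These are exactly the walls that subdivide the interior of $\Pol_n$ into many open Gr\"obner cones; combinatorially they correspond to the choice of triangulating diagonal in each square face of the fundamental polytope $FP_n$, which is precisely how the paper distinguishes the six maximal types in \Cref{subsec:3D-vols}. A correct proof must therefore work with the full Gr\"obner/secondary-fan structure (equivalently, the regular subdivisions of $\Delta_{n-1}\times\Delta_{n-1}$ of \Cref{prop:tight-spans}), not just the pattern of tight triangle inequalities; this is the route taken in \cite{T}.
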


Up to the action of the symmetric group $S_3$ on the labels of the vertices, in dimension $2$ there is precisely one maximal tropical type of polytrope, namely the hexagon. In dimension $3$ there are $6$ distinct maximal tropical types up to symmetry \cite{JK, JDlP}. Using \Cref{th:polytropes:comb:types}, \cite{T} showed that in dimension $4$ there are $27248$ distinct types up to the symmetric group action. In higher dimensions, this number is unknown. These tropical type counts were independently confirmed in \cite{JS} using the following identification:

\begin{prop}[{\cite[Theorem 1 and Lemma 7]{DS}}]
Let $V = \{\bv_1,\dots,\bv_r\} \subseteq \TP^{n-1}$.
There is a piecewise-linear isomorphism between the tropical polytope $\tconv(V)$ and the polyhedral complex of bounded faces of the unbounded polyhedron
\[
    \mathcal{P}_V = \{ (\by,\bz)\in \R^{r+n} / (1,\dots,1,-1,\dots,-1) \mid y_i + z_i \leq v_{ij} \text{ for all } i\in[r], j\in[n] \}.
\]
The boundary complex of $\mathcal{P}_V$ is polar to the regular subdivision of the products of simplices $\Delta_{r-1}\times\Delta_{n-1}$ defined by the weights $v_{ij}$.
\end{prop}

In particular, the bounded faces of $\mathcal P_V$ are dual to the interior faces of the regular subdivision of $\Delta_{r-1}\times\Delta_{n-1}$, i.e. the faces not completely contained in the boundary of $\Delta_{r-1}\times\Delta_{n-1}$. If $\tconv(V)$ is a polytrope, then \Cref{prop:polytrope_vertex_hyperplane_description} implies that $r=n$.
Even more, $\tconv(V)$ is a polytrope if and only if the bounded region of $\mathcal P_V$ consists of a single bounded face \cite[Th. 15]{DS}, and hence all maximal cells in the dual subdivision of $\Delta_{n-1}\times\Delta_{n-1}$ share some vertex.

By the Cayley trick \cite{HRS}, this is identical to studying mixed subdivisions of the dilated simplex $n\cdot \Delta_{n-1}$. Regular subdivisions of products of simplices can thus be related to certain regular subdivisions of the \emph{fundamental polytope} $FP_n$, a subpolytope of $n\cdot \Delta_{n-1}$ introduced by Vershik \cite{V} and further studied by Delucchi and Hoessly \cite{DH}. Their central subdivisions were studied in \cite{ardila11_rootpolytopesgrowth} as subdivisions of the boundary of the root polytope of type $A$.
\[
    FP_n = \conv\{\be_i - \be_j \mid i\ne j\in [n] \}.
\]
\begin{figure}[h!]
    \centering
    \includegraphics[height=3in]{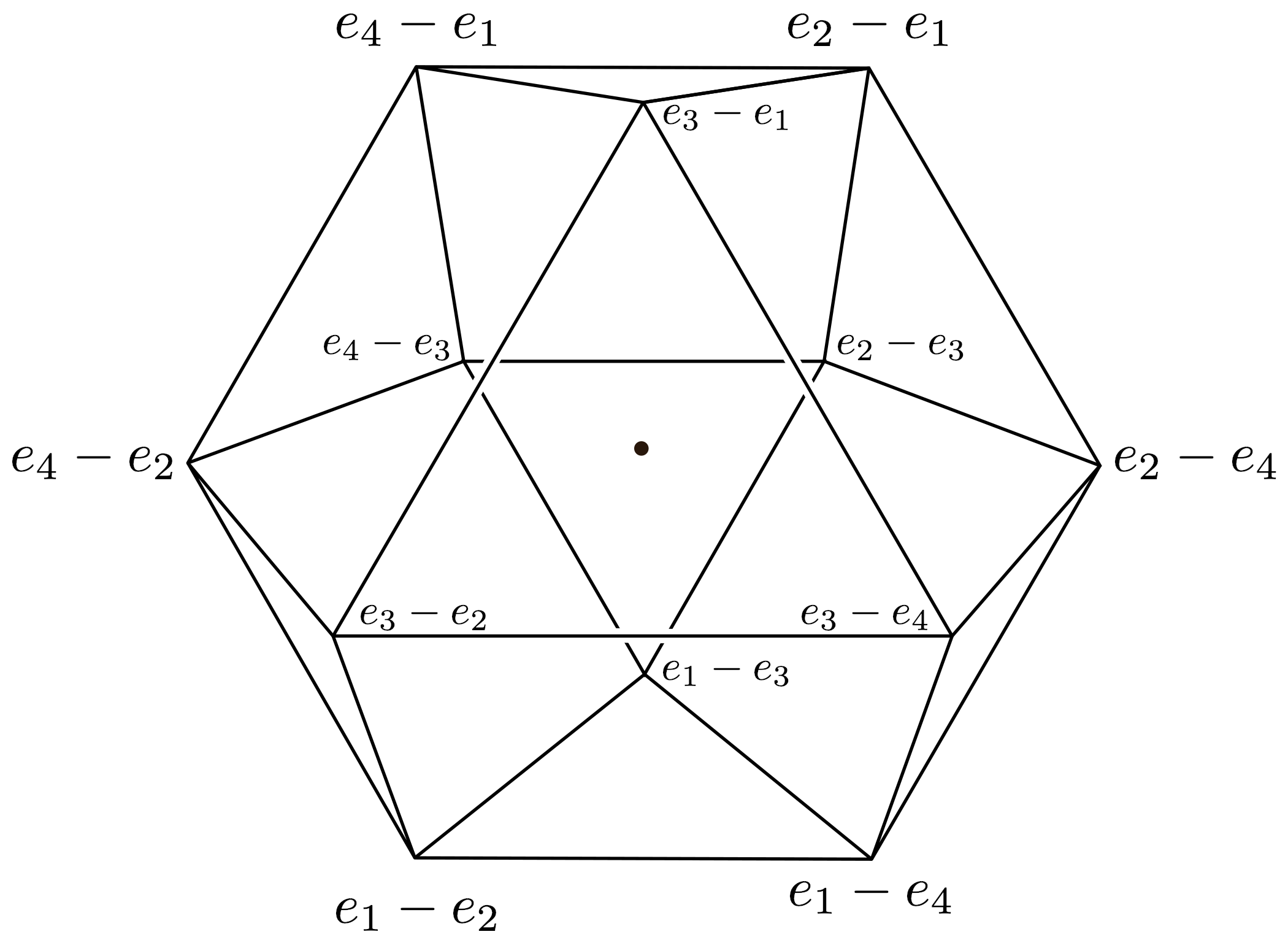}
    \caption{The fundamental polytope $FP_4$ with unique interior lattice point $\mathbf 0$, whose regular central subdivisions correspond to tropical types of 3-dimensional polytropes.}
    \label{fig:3d_fundamental_polytope}
\end{figure}
The fundamental polytope $FP_4$ is pictured in \Cref{fig:3d_fundamental_polytope}. A \emph{regular central subdivision} of $FP_n$ is a regular subdivision in which the unique relative interior lattice point $\mathbf 0$ of $FP_n$ is lifted to height $0$ and is a vertex of each maximal cell. The number of tropical types can be enumerated using the following theorem:
\begin{theorem}[{\cite[Th. 22]{JS}}] The tropical types of full-dimensional polytropes in $\TP^{n-1}$ are in bijection with the regular central subdivisions of $FP_n$. 
\end{theorem}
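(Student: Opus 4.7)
The plan is to chain together Proposition 2.5 with the Cayley trick, and then to localize the resulting mixed subdivision of $n \cdot \Delta_{n-1}$ around the shared vertex so as to identify it with a regular central subdivision of $FP_n$. Each step of the chain is an established bijection, so the main work is in setting up the last step and checking that combinatorial types are preserved at every stage.

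First I would invoke Proposition 2.5 to identify combinatorial types of full-dimensional polytropes in $\TP^{n-1}$ with regular subdivisions of $\Delta_{n-1}\times\Delta_{n-1}$ whose maximal cells share a common vertex; the combinatorial type of the polytrope is recovered as the combinatorial type of the tight span. Next I would apply the Cayley trick \cite{HRS}, which gives a bijection between regular subdivisions of $\Delta_{n-1}\times\Delta_{n-1}$ and regular mixed subdivisions of the Minkowski sum $n\cdot\Delta_{n-1}$. Under this bijection, a vertex common to every maximal cell of the subdivision of $\Delta_{n-1}\times\Delta_{n-1}$ corresponds to a lattice point $v\in n\cdot\Delta_{n-1}$ that is a vertex of every maximal mixed cell, and the combinatorial type of the tight span is read off from the adjacency structure of maximal cells around $v$.

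The substantive step is then to translate ``mixed subdivisions of $n\cdot\Delta_{n-1}$ with a shared vertex'' into ``regular central subdivisions of $FP_n$.'' By permuting coordinates via the $S_n$-action we may assume $v=\mathbf{1}=\be_1+\cdots+\be_n$. The lattice neighbors of $\mathbf{1}$ in $n\cdot\Delta_{n-1}$ are exactly the points $\mathbf{1}+(\be_i-\be_j)$ for $i\ne j$, and their convex hull together with $\mathbf{1}$ itself is the translate $\mathbf{1}+FP_n$. Restricting the mixed subdivision to the star of $\mathbf{1}$ and translating by $-\mathbf{1}$ then produces a regular subdivision of $FP_n$ in which $\mathbf{0}$ is a vertex of every maximal cell, i.e.\ a regular central subdivision, and the combinatorial structure of the star around $\mathbf{1}$ is exactly what determined the combinatorial type of the polytrope via the tight-span construction.

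For the reverse direction, given a regular central subdivision of $FP_n$ coming from a lifting function $\omega\colon FP_n\cap\Z^n\to\R$ with $\omega(\mathbf{0})=0$, I would extend $\omega$ to a lifting function on the lattice points of $n\cdot\Delta_{n-1}$ by assigning sufficiently large heights to all points outside $\mathbf{1}+FP_n$; this forces $\mathbf{1}$ to be a vertex of every maximal mixed cell and forces the star of $\mathbf{1}$ in the induced regular mixed subdivision to coincide with the given central subdivision of $FP_n$ after translation. The main obstacle is checking that this construction is truly a bijection on combinatorial types: one must verify that any two sufficiently generic extensions of the same $\omega$ produce combinatorially identical mixed subdivisions on the star of $\mathbf{1}$, and conversely that two mixed subdivisions of $n\cdot\Delta_{n-1}$ that agree combinatorially on the star of $\mathbf{1}$ give rise to the same tight span in Proposition 2.5. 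Verifying this independence of choices, and confirming that cells of the mixed subdivision outside the star of $\mathbf{1}$ contribute nothing to the tight span, is where the bulk of the technical care is required.
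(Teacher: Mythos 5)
This statement is quoted from \cite{JS} and the paper offers no proof of its own, only the two-sentence reduction sketch (tight spans, Cayley trick, a shifted subpolytope of $n\cdot\Delta_{n-1}$) that you are fleshing out; so your overall strategy is the intended one. However, two of your intermediate claims are wrong as stated, and they sit exactly where the content of the theorem lives.

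First, the Cayley trick does not translate ``all maximal cells of the subdivision of $\Delta_{n-1}\times\Delta_{n-1}$ share a vertex'' into ``some lattice point of $n\cdot\Delta_{n-1}$ is a vertex of every maximal mixed cell.'' Under the Cayley correspondence a cell $C$ of the product becomes a Minkowski sum $F_1+\dots+F_n$ with $F_j=\conv\{\be_i : (\be_i,\be_j)\in C\}$, so a single shared vertex $(\be_i,\be_j)$ only says $\be_i\in F_j$ for every maximal cell. To place $\mathbf 1=\be_1+\dots+\be_n$ in every maximal mixed cell you need all $n$ diagonal vertices $(\be_k,\be_k)$ to be shared --- which is what the Kleene-star normalization $c_{kk}=0$ provides --- and even then you must argue that $\mathbf 1$ is a \emph{vertex} of each cell, not merely a point of it. Second, and more seriously: if $\mathbf 1$ is a vertex of every maximal cell, then the star of $\mathbf 1$ is \emph{all} of $n\cdot\Delta_{n-1}$, which strictly contains $\mathbf 1+FP_n$ for $n\ge 3$. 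For instance, for $n=3$ the mixed cell $\conv\{\be_1\}+\conv\{\be_1,\be_2\}+\conv\{\be_1,\be_3\}$ has $\mathbf 1$ as a vertex but also contains the corner $3\be_1$ of $3\Delta_2$, which lies outside the hexagon $\mathbf 1+FP_3$. So ``restricting to the star of $\mathbf 1$ and translating by $-\mathbf 1$'' does not produce a subdivision of $FP_n$. The operation you actually need is to intersect each maximal cell with the subpolytope $\mathbf 1+FP_n$, and then to verify (i) that each intersection is the convex hull of $\mathbf 0$ and a set of roots $\be_i-\be_j$, i.e.\ that no new vertices appear on the boundary of $FP_n$, and (ii) that the resulting subdivision of $FP_n$ is regular for the restricted height function. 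Neither point is automatic, and your reverse direction (extending a lifting of $FP_n$ by ``sufficiently large'' heights outside $\mathbf 1+FP_n$) inherits the same problem, since the well-definedness you defer to the end is precisely the question of whether the structure on $\mathbf 1+FP_n$ is independent of what the subdivision does outside it. As written, the middle of the argument does not go through; either repair it along the lines above, or replace it by the direct projection $\Delta_{n-1}\times\Delta_{n-1}\to FP_n$, $(\be_j,\be_i)\mapsto\be_i-\be_j$, which collapses the shared diagonal vertices to $\mathbf 0$ and carries the subdivision of \Cref{prop:tight-spans} to a central subdivision without detouring through $n\cdot\Delta_{n-1}$.
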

We connect our computational results to regular central subdivisions of the fundamental polytope in \Cref{subsec:3D-vols,subsec:4D-vols}.

\subsection{Toric geometry}
\label{subsec:toric_geometry}
In order to compute multivariate volume polynomials of polytropes we use methods from toric geometry. We now give a brief summary of the toric geometry needed in our computation. For further details, the reader may consult \cite[Ch. 12.4, 13.4]{cox2011toric} or \cite[Ch. 5.3]{fulton}.

Let $P$ be a lattice polytope with $m$ facets, so that $P$ is given by
 \[ P = \{ \bx\in\R^n \mid \langle \bx, \bu_i \rangle + c_i \geq 0 \text{ for } i\in[m]\}, \]
where $\bu_i$ is the primitive facet normal of the facet $F_i$. 
We denote by $\Sigma$ the normal fan of $P$, and by $X$ the toric variety defined by the fan $\Sigma$.
We assume that $X$ is smooth, so $\Sigma$ is a simplicial fan and $P$ a simple polytope.

Let $d=\dim(X)$.
A torus-invariant prime divisor $D_i$ of $X$ is a subvariety of $X$ of codimension $1$, which is in bijection with a ray of $\Sigma$ and hence also with a facet $F_i$ of $P$. Given the polytope $P$, we can define the divisor $D_P$ as the linear combination $D_P = \sum_{i=1}^m  c_i D_i$. At the same time, as $D_i$ is an irreducible  subvariety of $X$ of codimension $1$, it gives rise to a cohomology class $[D_i] \in H^2(X,\Q)$ and $[D_P] = [\sum_{i=1}^m c_i D_i] = \sum_{i=1}^m c_i [D_i] \in H^2(X,\Q)$. 

Given irreducible subvarieties $V,W \subseteq X$ with $\dim(V) = k_1,  \dim(W) = k_2$, we can consider the cup product $[V]\smile [W] \in H^{2(k_1 + k_2)}(X,\Q)$. If $k_1 + k_2 = d$, then by Poincare-duality $[V]\smile [W] \in H^{2d}(X,\Q) \cong H_0(X,\Q) \cong \Q$ and thus we can define the \emph{integral (or intersection product)} 
$\int_X ([V] \smile [W]) \in \Q.$ We use $[V]^2$ as shorthand notation for $([V] \smile [V])$.

\begin{theorem}[{\cite[Theorem 13.4.1]{cox2011toric}}]\label{th:cohomology-volume}
	The normalized volume of $P$ is given by
	$$\Vol(P) = \int_X \left[\sum_{i=1}^m c_i D_i\right]^d.$$
\end{theorem} 
In order to be able to compute these polynomials systematically, we make use of an identification of the cohomology ring as a polynomial ring.
Let $K$ be a field of characteristic 0 and $S$ be a simplicial complex on $m$ vertices. The \emph{Stanley-Reisner ideal} $M$ in the polynomial ring $R = K[x_1,\dots,x_m]$ is the ideal generated by the (inclusion-minimal) non-faces of $S$, i.e.
\[
M = \langle x_{i_1}\cdots x_{i_k} \mid \{i_1,\dots,i_k\} \text{ is not a face of S } \rangle.
\]
Let $\mathcal B$ be a basis of $\Z^n$. 
Since $\Sigma$ is simplicial, we can consider the \emph{Stanley-Reisner ideal} $M$ of $\Sigma$, i.e. the Stanley-Reisner ideal of the boundary complex $\partial P^\circ $ of the polar of $P$.
The cohomology ring $H^*(X,\mathbb{Q})$ is isomorphic to the quotient ring $R/(L+M)$, where $L$ is the ideal 
\[ L = \left\langle \sum_{i=1}^m \langle \mathbf b,\bu_{i} \rangle x_{i} \, \middle |\, \mathbf b \in \mathcal B \right\rangle.  \]

The variable $x_i$ in $R/(L+M)$ corresponds to  $[D_i]$, the cohomology class of a torus-invariant prime-divisor, and hence to a facet of $P$. 
Therefore, the expression in \Cref{th:cohomology-volume} translates to a polynomial
\[
 \left(\sum_{i=1}^m c_i x_i\right)^{d} \in R/(L+M).
\]
 The top cohomology group is a one-dimensional vector space. A canonical choice of a basis vector in $R/(L+M)$ is any square-free monomial $\mathbf{x}^{\bm{ \alpha}}$ which indexes a vertex of $P$. The expression $(\sum_{i=1}^m c_i x_i)^{\dim(X)}$ has a representation $\delta \cdot \mathbf{x}^{\bm \alpha}$ in $R/(L+M)$. 
 The volume of $P$ will be given by the coefficient $\delta$, up to a correcting factor that solely depends on the choice of the basis $\mathbf{x}^{\bm \alpha}$ \cite[Algorithm 1]{DLS}.

Replacing the values $c_1,\dots,c_m$ defining the facets of $P$ by indeterminates $a_1,\dots,a_m$ we obtain a polynomial which gives the volume $\Vol(P)$ of the polytope $P$ when evaluated at $c_1,\dots,c_m$. 
We hence refer to such a polynomial as a \emph{volume polynomial} of $P$. This polynomial depends only on the normal fan of $P$, and so polytopes with the same normal fan determine the same volume polynomial.
\Cref{alg:cohomology_class} describes how to compute the integral of a cohomology class of $X$.

\section{Computing multivariate polynomials}
\label{sec:methods}

In this section we introduce our multivariate polynomials of interest and describe methods for computing these functions for polytropes, motivated by the methods in \cite{T} and \cite{DLS}.

\subsection{Computing multivariate volume polynomials}\label{subsec:vol_pols_computation}

We seek to compute a \emph{multivariate volume polynomial} for each tropical type of polytropes as discussed in \Cref{subsec:toric_geometry}: that is, a polynomial in variables $a_{ij}$ for each tropical type which evaluates to the volume of a polytrope $P(\bc)$ of the appropriate type when given the respective Kleene star $\bc$. For each tropical type, our computation of such a polynomial will depend on a fixed Kleene star $\bc$ of the appropriate type.

Consider the ``indeterminate polytrope''
\[P(\ba)=\{\by\in\R^{n}\mid y_i - y_j \leq a_{ij}, y_{n}=0\},\]
defined by indeterminates $a_{ij}$. By \Cref{prop:polytrope_vertex_hyperplane_description}, $a_{ij}$ is the weight of the shortest path in a weighted complete digraph. 
As $P(\ba)$ is contained in the linear space given by $y_n = 0$, we can project onto the first $n-1$ coordinates, which yields
\[ P(\ba)=\{\by\in\R^{n-1}\mid B\by \leq \mathbf{a}\} \]
for a suitable matrix $B\in\Z^{(n^2-n)\times (n-1)}$
with rows $B_{ij}$ indexed by $ij \in [n]^2, i\neq j$. This is a representation of $P(\ba)$ given by $n^2-n$ inequalities $B_{ij} \by \leq a_{ij}$ and $\by = (y_1, \dots, y_{n-1})$, as in \Cref{ex:2d-B-description} below.
Note that for each $j\in[n-1]$ there is an inequality $-y_j \leq a_{nj}$.
We introduce a nonnegative slack-variable $y_{ij}$ for each inequality and replace the inequality by the equation $y_i - y_j + y_{ij} = a_{ij}$.
This gives a representation as $(B \mid  Id_n) \by = \ba$ with $\by = (y_1,\dots, y_{n-1}, y_{12},\dots, y_{n-1,n})$.

In particular, we have the equation $-y_j + y_{nj} = a_{nj}$. We can thus substitute the variable $y_j, j\in~[n~-~1]$ by $y_{nj}-a_{nj}$, which leaves us with a system of equations of the form 
\begin{align*}
y_{ij}+y_{ni}-y_{nj} &= a_{ij}+a_{ni}-a_{nj}  \quad  (1) \\ 
y_{jn} + y_{nj} &= a_{jn} + a_{nj}.  \qquad \quad (2) 
\end{align*}
Adding these equations gives $y_{ij} + y_{ni} + y_{jn} = a_{ij} +a_{ni} +a_{jn}$  (1'). The set of solutions to the system with equations (1) and (2) is equal to the set of solutions to the system with (1') and (2), yielding a matrix $A$ such that 
\[ P(\ba) = \{\by\in\R^{n^2-n}_{\geq 0} \mid A\by = A\mathbf a \}\]
and $\ker(A)\cap \R^{n^2-n}_{\geq 0} = 0 $, thus fulfilling the general assumptions in \cite{DLS}. The expressions $a_{ij}+a_{ni}+a_{jn}$ and $a_{in} + a_{ni}$ have a nice interpretation in terms shortest paths of the complete digraph: these are the weights of the shortest cycle passing through $i$ and $n$ and the shortest directed cycle passing through $i,j$ and $n$ respectively.
A matrix is called \emph{totally unimodular} if every minor equals $-1, 0,$ or $1$. A full-dimensional lattice polytope in $\R^n$ is called \emph{unimodular} if each of its vertex cones is generated by a basis of $\Z ^n$. 
This condition is sometimes referred to as \emph{smooth} or \emph{Delzant}. It is well-known that our constraint matrix $A$ is totally unimodular \cite[Section 2.3.3]{T}, and that maximal polytropes are unimodular and simple \cite[Section 7.3]{handbook}.

\begin{example}
\label{ex:2d-B-description}
Any $2$-dimensional polytrope $P(\ba)$ has an $H$-description as 

\[ P(\ba) = \left\{  
    \begin{pmatrix} y_1 \\ y_2 \end{pmatrix} \in \RR^2 \, \middle|\, \begin{aligned} \, 
                y_1-y_2 \leq a_{12}, y_2-y_1 \leq a_{21}  \\
                y_1 \leq a_{13}, y_2\leq a_{23} \\
                y_1 \geq - a_{31}, y_2\geq -a_{32}  \end{aligned}
 \right\},
\]
when $\mathbf a$ is contained in the polytrope region $\mathcal Pol_3$.
We want to compute the constraint matrix $A$ by turning the above description of a polytrope into one involving only equalities. We begin by translating the above to a matrix description of $P(\ba)$:

\[\begin{pmatrix}
1 & -1  \\
1 & 0  \\
-1 & 1   \\
0 & 1  \\
-1 & 0   \\
0 & -1  \\
\end{pmatrix}
\begin{pmatrix}
y_1 \\ y_2 
\end{pmatrix}
\leq 
\begin{pmatrix}
a_{12} \\ a_{13}  \\ a_{21} \\ a_{23}  \\ a_{31} \\ a_{32} 
\end{pmatrix}
\]
Introducing slack variables $y_{ij}$, we get the representation

\[\begin{pmatrix}
 1 & -1 & 1 & 0 & 0 & 0 & 0 & 0 \\
 1 &  0 & 0 & 1 & 0 & 0 & 0 & 0 \\
-1 &  1 & 0 & 0 & 1 & 0 & 0 & 0 \\
 0 &  1 & 0 & 0 & 0 & 1 & 0 & 0 \\
-1 &  0 & 0 & 0 & 0 & 0 & 1 & 0 \\
 0 & -1 & 0 & 0 & 0 & 0 & 0 & 1 \\
\end{pmatrix}
\begin{pmatrix}
y_1\\y_2\\y_{12}\\y_{13}\\y_{21}\\y_{23}\\y_{31}\\y_{32}\end{pmatrix}
=
\begin{pmatrix}
a_{12} \\ a_{13} \\ a_{21} \\ a_{23} \\ a_{31} \\ a_{32}
\end{pmatrix}\]
Substituting $y_1 = y_{31} - a_{31}, y_2 = y_{32} - a_{32}$ and deleting zero-columns and zero-rows gives us
\[\begin{pmatrix}
1 & 0 & 0 & 0 &  1 & -1 \\
0 & 1 & 0 & 0 &  1 &  0 \\
0 & 0 & 1 & 0 & -1 &  1 \\
0 & 0 & 0 & 1 &  0 &  1 \\
\end{pmatrix}
\begin{pmatrix}
y_{12}\\y_{13}\\y_{21}\\y_{23}\\y_{31}\\y_{32}\end{pmatrix}
=
\begin{pmatrix}
a_{12}+a_{31}-a_{32} \\ a_{13}+a_{31} \\ a_{21}-a_{31}+a_{32} \\ a_{23}+a_{32}
\end{pmatrix}  \]
This is equivalent to
\[A\by = \begin{pmatrix}
1 & 0 & 0 & 1 &  1 &  0 \\
0 & 1 & 0 & 0 &  1 &  0 \\
0 & 1 & 1 & 0 &  0 &  1 \\
0 & 0 & 0 & 1 &  0 &  1 \\
\end{pmatrix}
\begin{pmatrix}
y_{12}\\y_{13}\\y_{21}\\y_{23}\\y_{31}\\y_{32}\end{pmatrix}
=
\begin{pmatrix}
a_{12}+a_{23}+a_{31} \\ a_{13}+a_{31} \\ a_{21}+a_{13}+a_{32} \\ a_{23}+a_{32}
\end{pmatrix} = A\mathbf{a}.\]
This gives us the desired representation $P(\ba) = \{\by\in\R^{n^2-n}_{\geq 0} \mid A\by = A\mathbf a \}$.
\end{example}

Let $K=\Q(a_{ij} \mid (i,j)\in [n]^2,\ i\neq j)$,  for $a_{ij}$ indeterminate variables, and consider the polynomial ring ${R=K[x_{ij} \mid (i,j)\in [n]^2,\ i\neq j]}$.
Following \cite{DLS}, we consider the toric ideal seen previously in \Cref{subsec:background_polytropes}:
\[ I = \langle \mathbf x^{\mathbf r} - 1 \mid \mathbf r \text{ is a row of } A \rangle
= \langle x_{in}x_{ni}-1, x_{ij}x_{jn}x_{ni}-1\rangle
= \langle x_{ij}x_{ji}-1, x_{ij}x_{jk}-x_{ik}\rangle
,\] 
where $(i,j,k)\in[n]^3$ are pairwise distinct.

Fix a tropical type and Kleene star $\bc$ corresponding to a polytrope $P(\bc)$ of that type. We write $M=in_\mathbf{c}(I)$ for the initial ideal of $I$ with respect to the weight vector $\mathbf c$. 
\begin{prop}[{\cite[Prop. 2.3]{DLS}}]
	The initial ideal $M$ is the Stanley-Reisner ideal of the normal fan $\Sigma$ of the simple polytope $P(\bc)$.
\end{prop}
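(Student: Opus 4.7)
The plan is to invoke Sturmfels' classical correspondence between initial ideals of toric ideals and regular triangulations of the associated point configuration, and then check that in the polytrope setting this triangulation agrees with the face structure of the normal fan $\Sigma$.

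First, since $A$ is totally unimodular, every initial ideal $in_{\bc'}(I)$ is a squarefree monomial ideal. By the standard correspondence for toric ideals of unimodular configurations (Chapter 8 of Sturmfels' \emph{Gr\"obner Bases and Convex Polytopes}), this monomial ideal is the Stanley-Reisner ideal of the regular triangulation $\Delta_\bc$ of the point configuration $\mathcal A=\{a_{ij}\}$ formed by the columns of $A$, where the entries $c_{ij}$ supply the lifting heights.

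Next, identify $\Delta_\bc$ with the face structure of $\Sigma$. Since $P = \{\bx \geq \mathbf 0 \mid A\bx = A\bc\}$, each facet $F_{ij}$ of $P$ is cut out by the hyperplane $\{x_{ij} = 0\}$, so the rays of $\Sigma$ biject with the columns of $A$ (equivalently, with the variables $x_{ij}$). A subset $S$ of column indices forms a face of $\Delta_\bc$ if and only if its complement contains the support of a basic feasible solution of $A\bx = A\bc$, if and only if the facets $\{F_{ij}\}_{(i,j)\in S}$ share a common vertex of $P$, if and only if $S$ spans a cone of $\Sigma$. The Stanley-Reisner ideals of $\Delta_\bc$ and of $\partial P^\circ$ therefore coincide, giving the claim.

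The main obstacle is the combinatorial translation in the second step: checking that bases of the column configuration of $A$ correspond to vertices of $P$ and to maximal cones of $\Sigma$ in a manner compatible with inclusion of faces. Total unimodularity of $A$ and simplicity of $P$ streamline this check—each vertex of $P$ is determined by exactly $n-1$ vanishing slack variables, and the complementary columns are affinely independent—but one must verify that the triangulation produced by Sturmfels' theorem is precisely the one whose simplices are these bases, using genericity of $\bc$ in the interior of its Gr\"obner cone.
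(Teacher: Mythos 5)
The paper does not prove this proposition; it is imported from \cite{DLS}, so your argument has to stand on its own. Its skeleton — Sturmfels' initial-ideal/regular-triangulation dictionary, squarefreeness from unimodularity, then matching the resulting complex with $\Sigma$ via vertices of $P$ — is the right one, but the first step applies the dictionary to the wrong configuration, and this is not a cosmetic slip. The ideal $I=\langle \bx^{\mathbf r}-1 \mid \mathbf r \text{ a row of } A\rangle$ is the lattice ideal of the lattice spanned by the \emph{rows} of $A$; it is not the toric ideal of the column configuration of $A$, which by definition is generated by the binomials $\bx^{u^+}-\bx^{u^-}$ with $Au=0$. Concretely, in \Cref{ex:2d-B-description} the binomial $x_{13}x_{31}-1$ lies in $I$, but the columns of $A$ indexed by $13$ and $31$ sum to $(1,2,1,0)^{T}\neq 0$, so this binomial is not in the toric ideal of the columns. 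Even dimensionally your step 1 gives the wrong answer: for $n=3$ the six columns of $A$ span a rank-$4$ lattice, so a regular triangulation of that configuration is $3$-dimensional, whereas the ideal $M$ computed in \Cref{ex:vol_pol_2d} has nine squarefree quadratic generators and its complex is the $1$-dimensional hexagonal cycle, i.e.\ the normal fan of the hexagon.

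The repair is to apply the dictionary to the Gale-dual configuration, namely the primitive facet normals $\bu_{ij}$ (the ray generators of $\Sigma$): one checks that $\sum_{ij} r_{ij}\,\bu_{ij}=0$ for every row $\mathbf r$ of $A$ (e.g.\ $\bu_{12}+\bu_{23}+\bu_{31}=0$ and $\bu_{13}+\bu_{31}=0$), so $I$ is precisely the toric ideal of the vector configuration $\{\bu_{ij}\}$. For this configuration the regular subdivision with lifting heights $c_{ij}$ is, essentially by definition, the normal fan of $P=\{\by : \langle \bu_{ij},\by\rangle \le c_{ij}\}$ — the lower faces of the lifted cone project onto the cones of $\Sigma$ — and for $\bc$ in an open Gr\"obner cone ($P$ maximal, hence simple) this subdivision is a triangulation. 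Moreover $\{\bu_{ij}\}$ is the network matrix of the complete digraph, hence totally unimodular, so $in_{\bc}(I)$ is squarefree and equals the Stanley--Reisner ideal rather than merely having it as its radical. Your second paragraph's chain of equivalences ($S$ spans a cone of $\Sigma$ iff the facets indexed by $S$ share a vertex iff the complement of $S$ contains the support of a vertex of $P$) is a correct description of the target complex and is consistent with \Cref{prop:min_primes}, but as written it does not follow from your misapplied first step; with the Gale-dual configuration in place it becomes the routine verification you anticipated.
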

In order to compute the volume polynomial, we need to know the minimal primes of $M$. 
\begin{prop}[{\cite[Lemmas 5 and 6]{block2005tropical}}, \cite{DLS}]\label{prop:min_primes}
	The facets of $P(\bc)$ are in bijection with variables $x_{ij}$. The vertices of $P(\bc)$ are in bijection with minimal primes of $M$. 
\end{prop}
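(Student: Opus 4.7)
The plan is to prove each bijection separately, leveraging the preceding proposition which identifies $M$ as the Stanley–Reisner ideal of $\Sigma$. The first bijection is essentially a bookkeeping observation about the slack variables introduced in the matrix formulation, while the second is a translation of the standard correspondence between minimal primes of Stanley–Reisner ideals and maximal faces of the underlying simplicial complex.

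For the first bijection, I would recall that $P$ is a maximal polytrope, and hence, by the discussion in \Cref{subsec:background_polytropes}, every inequality $y_i - y_j \leq c_{ij}$ with $i \neq j$ is facet-defining. Looking at the construction in \Cref{ex:2d-B-description}, the slack variable $x_{ij}$ was introduced precisely as the slack of the inequality $y_i - y_j \leq c_{ij}$, so the rows of $A$ (equivalently, the slack variables) are in natural bijection with the index set $\{(i,j) \in [n]^2 : i \neq j\}$ that enumerates the facet inequalities. The map sending the facet $\{x_{ij} = 0\} \cap P$ to the variable $x_{ij}$ is then manifestly a bijection onto the variable set of $R$.

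For the second bijection, I would use the general algebraic fact that the minimal primes of the Stanley–Reisner ideal $I_\Delta$ of a simplicial complex $\Delta$ are precisely the ideals $\mathfrak{p}_F = \langle x_i : i \notin F \rangle$ indexed by facets $F$ of $\Delta$. Applying this to $M = I_{\partial P^\circ}$ from the previous proposition, minimal primes of $M$ correspond to the facets of $\partial P^\circ$, i.e., to the maximal cones of $\Sigma$. Since $P$ is simple (hence $\Sigma$ is simplicial, justifying the Stanley–Reisner description), the maximal cones of $\Sigma$ are in bijection with the vertices of $P$ via the normal-fan correspondence. Composing these bijections gives the claim. Explicitly, a vertex $v$ of $P$ is the intersection of exactly $n-1$ facets, corresponding to a set $F_v$ of $n-1$ variables $x_{ij}$, and the associated minimal prime is generated by the complementary variables.

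I expect no serious obstacle here: the substantive content was encapsulated in the preceding proposition asserting that $M$ is the Stanley–Reisner ideal of $\Sigma$, which already used simplicity of $P$ and total unimodularity of $A$. The remaining step is purely a translation between three languages (polytopes, fans, and Stanley–Reisner theory), so the "proof" really consists of carefully identifying which facet of $\partial P^\circ$ corresponds to which vertex of $P$ and chasing the indexing through. The only care required is to confirm in the write-up that the bijection in the first part uses maximality (so that no inequality is redundant) and that the correspondence in the second part matches the standard convention.
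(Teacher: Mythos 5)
Your argument is correct and follows exactly the route the paper intends: the paper itself only cites this proposition, but the surrounding text --- the preceding proposition identifying $M$ with the Stanley--Reisner ideal of the normal fan $\Sigma$, and the explicit description $\langle x_{ij} \mid ij \notin I_v\rangle$ of a minimal prime given immediately afterwards --- is precisely the chain of standard correspondences (vertices of $P$ $\leftrightarrow$ maximal cones of $\Sigma$ $\leftrightarrow$ facets of the simplicial complex $\leftrightarrow$ minimal primes of $M$) that you chase. The only nitpick is that the slack variables index the \emph{columns} of $A$ rather than its rows, which does not affect the argument.
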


In the above bijection, the facet $F_{ij}$ given by the inequality $y_i - y_j = c_{ij}$ is identified with the variable $x_{ij}$.
A vertex $v$ of the polytrope can be identified with the minimal prime $\langle x_{ij} \mid ij \not\in ~ \mathcal I_v \rangle$, where $\mathcal I_v = \{ ij \mid F_{ij} \text{ contains } v \}$. Thus, a minimal prime is generated by variables which correspond to facets that do not contain a given vertex $v$.

Let $X$ be the smooth toric variety defined by the normal fan $\Sigma$ of the unimodular polytope $P(\bc)$. 
Let $\bu_{ij}$ denote the primitive ray generators of the normal fan of $P(\bc)$, i.e.  $\bu_{ij} = \be_j - \be_i$ for $i\ne j \in [n-1]\times[n-1]$ and $\bu_{ni} = \be_i,\ \bu_{in}= -\be_i$ for $i\in[n-1]$. Let further $\mathcal B$ be a basis for $\Z^n$.
Recall that the cohomology ring $H^*(X,\mathbb{Q})$ is isomorphic to the quotient ring $R/(L+M)$, where $M$ is the initial ideal from above and $L$ is the ideal
\[ L = \left\langle \sum_{\substack{ij \in [n]\times[n] \\ i\neq j}} \langle \mathbf b,\bu_{ij} \rangle x_{ij} \, \middle |\, \mathbf b\in\mathcal B \right\rangle.  \]
 Choosing $\mathcal B$ to be the standard basis for $\Z^n$, for a given vector $\mathbf b=\be_k$ we get 
 \[
      \sum_{\substack{ij \in [n]\times[n] \\ i\neq j}} \langle \mathbf e_k,\bu_{ij} \rangle x_{ij} 
     =  \sum_{j\in[n]} x_{kj} - \sum_{j\in[n]} x_{jk}
\]
 and so the ideal is equal to 
 \[ L = \left\langle \sum_{j\in[n]} x_{kj} - \sum_{j\in[n]} x_{jk} \, \middle | \, k\in[n] \right\rangle.  \]
 Considering the complete directed graph $K_n$ on $n$ vertices, this ideal can be viewed as generated by the cuts of $K_n$ that isolate a single vertex.
 
Let $D$ be the divisor on $X$ corresponding to the polytrope $P(\ba)$ given by the indeterminates $a_{ij}$, i.e. $P(\ba) = \{\by \in \R^{n} \mid y_i - y_j \leq a_{ij} , y_n=0\}$.
We can write $D$ as
	 $$D=\sum_{\substack{ij \in [n]\times[n] \\ i\neq j}} a_{ij}D_{ij},$$ 
where $D_{ij}$ is the prime divisor corresponding to the ray of $\Sigma$ spanned by $\bu_{ij}$.
 Let $$q=\sum_{\substack{ij \in [n]\times[n] \\ i\neq j}} a_{ij}x_{ij}$$ be the polynomial in $R$ representing the divisor $D$.
 
 In the following we present an algorithm to compute the integral of a top cohomology class of $X$. As the dimension of a polytrope $P(\bc)$ defined by a Kleene star $\bc\in \mathcal{GF}|_{\mathcal{P}ol_n}$ is $n-1$, we can compute the volume polynomial restricted to an open maximal cone of $\mathcal{GF}|_{\mathcal{P}ol_n}$ by
 	\[ \Vol_{P(\ba)}(\mathbf a) = \int_{X} [D]^{n-1} . \]
Note that the integral $\int_{X} [D]^{n-1}$ is a constant in $R$ and thus a polynomial with variables $a_{ij}$, as discussed in \Cref{subsec:toric_geometry}.  If the input of \Cref{alg:cohomology_class} is given by the polynomial $p=q^{n-1}$, the output is a multivariate volume polynomial. 
\\
\begin{algorithm}[Computing the integral of a cohomology class of $X$ {\cite[Alg. 1]{DLS}}] \label{alg:cohomology_class}
\begin{algorithmic}[1]
\item[]
    \REQUIRE A polynomial $p(x)$ with coefficients in a field $k\supset \Q$.
    \ENSURE The integral $\int_{X} p$ of the corresponding cohomology class on $X$.
    \STATE Compute a Gr\"obner basis $\mathcal G$ for the ideal $M+L$.
    \STATE Find a minimal prime $\langle x_j \, \mid  \,x_j \not\in \mathcal I_v \rangle $  of $M$, and compute the normal form of $\prod_{i\in \mathcal I_v} x_i$ modulo the Gr\"obner basis $\mathcal G$. It looks like $\gamma \cdot \mathbf{x}^{\bm \alpha}$, where $\gamma$ is a non-zero element of $k$ and $\mathbf{x}^{\bm \alpha}$ is the unique standard monomial of degree $n-1$.
    \STATE Compute the normal form of $p$ modulo $\mathcal G$ and let $\delta\in k$ be the coefficient of $\mathbf{x}^{\bm \alpha}$ in that normal form.
    \STATE Output the scalar $\delta/\gamma \in k$.
\end{algorithmic}
\end{algorithm}

The correctness of the algorithm follows from \cite[Section 2]{DLS}.
The discussion above implies the following theorem.

\begin{theorem}
Consider the ``indeterminate polytrope''
\[P(\ba)=\{\by\in\R^{n}\mid y_i - y_j \leq a_{ij}, y_{n}=0\},\] with a fixed normal fan.
Let
$$q=\sum_{\substack{ij \in [n]\times[n] \\ i\neq j}} a_{ij}x_{ij}$$ be the polynomial in $R$ in variables $x_{ij}$ and indeterminates $a_{ij} \in K$. Let $p = q^{n-1}$ be the input of  \Cref{alg:cohomology_class}. The  output $\frac{\delta}{\gamma} \in K$ is the multivariate volume polynomial of $P(\ba)$ in variables $a_{ij}$, i.e.
$$
    \Vol_{P(\ba)}(\ba) = \frac{\delta}{\gamma}.
$$
\end{theorem}

Recall that $\mathcal{GF}_n|_{\mathcal Pol_n}$ is the restriction of the Gr\"obner fan of $I$ to the polytrope region $\mathcal Pol_n$. By \Cref{th:polytropes:comb:types}, open cones of $\mathcal{GF}_n|_{\mathcal Pol_n}$ are in bijection with types of maximal polytropes. 
Since \Cref{alg:cohomology_class} only depends on $in_{\mathbf c}(I)$ and not the choice of $\bc$ itself, the multivariate volume polynomial is constant along an open cone of $\mathcal{GF}_n|_{\mathcal Pol_n}$. This reflects the fact that polytropes of the same tropical type have the same normal fan. Therefore, maximal polytropes of the same type have the same multivariate volume polynomial, and it suffices to compute the polynomial for only one representative $\mathbf c$ for each maximal cone.
Furthermore, the polynomials agree on the intersection of the closure of two of these cones \cite{vector_partition_functions}. Thus, given a Kleene star $\mathbf c$ corresponding to a non-maximal polytrope $P(\bc)$, we can choose any of the maximal closed cones that contain $\mathbf c$ and evaluate the corresponding multivariate volume polynomial at $\mathbf c$ to compute the volume of~$P(\bc)$.

\begin{example}\label{ex:vol_pol_2d}

We apply the above discussion to compute the multivariate volume polynomial for 2-dimensional polytropes. Note that the volume, Ehrhart- and $h^\ast$-polynomial of the hexagon can be derived by more elementary methods as, for example, counting unimodular simplices in an alcoved triangulation and Pick's formula. However, as the presentation is less clear in dimensions $3$ and $4$, we showcase the algebraic machinery on this example. The toric ideal $I$ is 
\[I = \langle x_{12}x_{23}x_{31}-1,\ x_{13}x_{31}-1,\ x_{21}x_{13}x_{32}-1,\ 
x_{23}x_{32}-1\rangle.\]
We also have $L$ as 
\[L = \langle 
    x_{12}+x_{13}-x_{21}-x_{31},\
    x_{21}+x_{23}-x_{12}-x_{32},\
    x_{31}+x_{32}-x_{13}-x_{23}
\rangle.\] 
Let $\mathbf c=(3,2,3,4,5,6)$. Then the corresponding polytrope $\Qa[\bc]$ is the hexagon displayed in \Cref{fig:example_hexagon}, with facets labeled according to \Cref{prop:min_primes}.

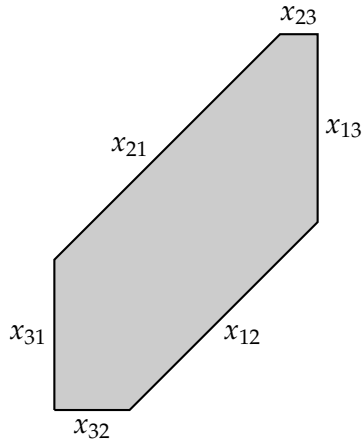
\begin{figure}[h]
    \centering
\begin{tikzpicture}[scale=.5]
    \draw[fill=black!20,thick] (-5,-6)--(-3,-6)--(2,-1)--(2,4)--(1,4)--(-5,-2)--(-5,-6);
    \node at (-4,-6.5) {$x_{32}$};
    \node at (0,-4) {$x_{12}$};
    \node at (2.7,1.5) {$x_{13}$};
    \node at (1.5,4.5) {$x_{23}$};
    \node at (-3,1) {$x_{21}$};
    \node at (-5.7,-4) {$x_{31}$};
\end{tikzpicture}

     \caption{The polytrope $Q$ corresponding to the vector $\mathbf c=(3,2,3,4,5,6)$. }
    \label{fig:example_hexagon}
\end{figure}

The initial ideal $M$ of $I$ with respect to the weight vector $\mathbf c$ is
\[ M = \langle x_{12}x_{21},\ x_{13}x_{21},\ x_{12}x_{23},\ x_{12}x_{31},\ x_{13}x_{31},\ x_{23}x_{31},\ x_{13}x_{32},\ x_{21}x_{32},\ x_{23}x_{32} \rangle . \]

A Gröbner basis for $M+L$ is given by
\begin{align*}
    \mathcal{G} = \langle
    & x_{31}-x_{12}+x_{21}-x_{13}, \ x_{13} x_{21}, \ x_{12} x_{13}+x_{13}^2, 
    \ x_{32}-x_{23}+x_{12}-x_{21}, \ x_{13} x_{23}+x_{13}^2, \\ 
  & x_{21}^2-x_{13}^2, \ x_{12} x_{21}, \ x_{12}^2-x_{13}^2, \
  x_{13}^3, \ x_{21} x_{23}+x_{13}^2, \ x_{12} x_{23}, \ x_{23}^2-x_{13}^2
    \rangle.
\end{align*}
Any vertex gives us a minimal prime. We choose the vertex $v$ incident to the facets labeled by $x_{31}$ and $x_{32}$, giving us the minimal prime
$\langle x_{ij} \mid ij \not\in \mathcal I_v \rangle  = \langle x_{12}, x_{13}, x_{21}, x_{23} \rangle$ and the monomial $\prod_{ij \in \mathcal I_v} x_{ij} = x_{31}x_{32}$. Modulo the Gröbner basis $\mathcal G$, this is $\gamma \cdot \mathbf{x}^{\bm \alpha} = (-1)x_{13}^2$, so $\gamma=-1$ and $\mathbf{x}^{\bm \alpha}=x_{13}^2$.

Let $q=\sum_{\substack{ij \in [n]\times[n] \\ i\neq j}} a_{ij}x_{ij}$. This is the polynomial in $R/(L+M)$ corresponding to the divisor described in \Cref{subsec:vol_pols_computation}. We want to compute the volume of the polytrope $\Qa[\bc]$. This can be done by applying \Cref{alg:cohomology_class} to $p = q^2$.

The polynomial $q^2$ modulo $\mathcal G$ is 
\begin{align*}
    & (a_{12}^2 - 2a_{12}a_{13} + a_{13}^2 + a_{21}^2 - 2a_{13}a_{23}  - 2a_{21}a_{23} \\
     + & a_{23}^2 - 2a_{21}a_{31}  + a_{31}^2 - 2a_{12}a_{32} - 2a_{31}a_{32} + a_{32}^2)  x_{13}^2,
\end{align*}
so the coefficient $\delta$ of $\mathbf{x}^{\bm \alpha}$ gives us the volume polynomial for the normalized volume
\begin{align*} \Vol_{\Qa}(\mathbf a) = \frac{\delta}{\gamma} =   -( 
    & a_{12}^2 - 2a_{12}a_{13} + a_{13}^2 + a_{21}^2 - 2a_{13}a_{23}  - 2a_{21}a_{23} \\
     + & a_{23}^2 - 2a_{21}a_{31}  + a_{31}^2 - 2a_{12}a_{32} - 2a_{31}a_{32} + a_{32}^2) .
\end{align*}
Evaluating at the original vector $\bc$ gives $79$, which is the normalized volume of the original polytope. The volume polynomial $\vol_{\Qa}(\mathbf a)$ for the Euclidean volume of $\Qa$ is given as
\[ \vol_{\Qa}(\mathbf a) = \frac{1}{2} \Vol_{\Qa}(\mathbf a). \]
\end{example}

\begin{remark}
Polytropes have also appeared in the literature as \emph{alcoved polytopes of type $A$}. The volumes of alcoved polytopes of type A were studied in \cite[Theorem 3.2]{LP} and extended to general root systems in \cite[Theorem 8.2]{LP2}, where the normalized volume of an alcoved polytope is described as a sum of discrete volumes of related alcoved subpolytopes. More specifically, given a fixed alcoved polytope of type A, the normalized volume of the respective polytope $P$ can be computed as
\[
    \Vol(P) = \sum_{\omega \in S_{n-1}} |P_\omega\cap \Z^{n-1} |,
\]
where $P_\omega = \{\bx\in\R^{n-1} \mid \bx+\Delta_\omega \subseteq P \}$ and $\Delta_\omega = \{\by \in \R^{n-1} \mid 0 \leq y_{\omega(1)} \leq \dots \leq y_{\omega(n-1)} \leq~ 1\}$.
While this is a formula that yields a value for the normalized volume for polytropes of any dimension, it does not allow a parametrized approach resulting in multivariate polynomials.
\end{remark}

\begin{remark}
The construction described in this section can be applied analogously for certain other classes $\mathcal P$ of polytopes with unimodular facet normals. In this more general framework, as described in \Cref{subsec:toric_geometry}, the ideal $I$ is the toric ideal of the variety $X$, and the ideals $M$ and $L$ are defined accordingly. Suppose the analogue of the matrix $A$ (as described in the paragraph before \Cref{ex:2d-B-description}) fulfills the conditions of \cite{DLS}, i.e. is unimodular and $\ker(A) \cap \RR^n_{\geq 0} = 0$. Then \Cref{alg:cohomology_class} applies and can be used to obtain multivariate volume polynomials of polytopes in the class $\mathcal P$.
\end{remark}

\subsection{Computing multivariate Ehrhart polynomials}\label{subsec:ehr_pols_computation}
We use the Todd operator to pass from the multivariate volume polynomials to the multivariate Ehrhart polynomials of polytropes. We begin by defining single and multivariate versions of the Todd operator and then explain the method we used for computations. Finally, we compute the multivariate and univariate Ehrhart polynomials of our running example. For more thorough background information on the Todd operator, see \cite[Chapter 12]{BR} and \cite[Chapter 13.5]{cox2011toric}.

The Todd operator is related to the Bernoulli numbers, a sequence of rational numbers $B_k$ for $ k \in~ \Z_{\geq 0}$ whose first few terms are $1,-\frac{1}{2},\frac{1}{6},0,-\frac{1}{30},0$.
They are defined through the following generating function:
$$
\frac{z}{\exp (z) -1 } = \sum_{k \geq 0 } \frac{B_k}{k!}z^k.
$$

\begin{definition}
The \emph{Todd operator} is the differential operator
$$ 
\Todd_h = 1 + \sum_{k \geq 1} (-1)^k \frac{B_k}{k!} \left(\frac{d}{dh}\right)^k .
$$
\end{definition}
Note that for a polynomial $f(h)$ of degree $d$, the function $\Todd_h(f)$ is a polynomial: since $(\frac{df}{dh})^k = 0$ for any $k>d$, we get the finite expression
\[
    \Todd_h(f) = 1 + \sum_{k=1}^{d} (-1)^k\frac{B_k}{k!} \left(\frac{df}{dh}\right)^k .
\]
The Todd operator can be succinctly expressed in shorthand as 
$$ \Todd_h =\frac{ \frac{d}{dh}}{1- \exp\left(-\frac{d}{dh}\right)}.$$
In order to compute the multivariate Ehrhart polynomials, we use a 
multivariate version of the Todd operator. For $\mathbf{h} = (h_1, h_2, \dots, h_m)$, we write
$$
\Todd_{\mathbf{h}} = \prod_{j =1 }^{m}\bigg( \frac{ \frac{\partial}{\partial h_j} }{1 -\exp(-\frac{\partial}{\partial h_j})} \bigg).
$$
The Todd operator allows one to pass from a continuous measure of volume on a polytope to a discrete measure: a lattice point count. Let $P = \{\mathbf{x} \in \R ^n :A \mathbf{x} \leq \mathbf{b}\}$, $\mathbf{b} \in \R ^m$. For $ \mathbf{h} \in \R ^m$, the \emph{shifted polytope}
$P_\mathbf{h}$ is defined as 
$$P_\mathbf{h} = \{\mathbf{x} \in \R ^n : A \mathbf{x} \leq \mathbf{b}+\mathbf{h}\}.$$  

\begin{theorem}[Khovanskii-Pukhlikov, {\cite[Ch. 12.4]{BR}}] 
\label{K-Ptheorem}
Let $P \subseteq \R^n$ be a unimodular $d$-polytope. Then
$$ 
\#(P \cap \Z ^n ) = \Todd _ \mathbf{h} \vol(P_\mathbf{h}) |_{\mathbf{h} = 0}.
$$
In words, the number of lattice points of $P$ equals the evaluation of 
the Todd operator at $\mathbf h =0$ on the relative Euclidean volume of the shifted polytope $P_\mathbf h$.
\end{theorem}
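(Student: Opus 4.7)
The plan is to prove the identity via Hirzebruch--Riemann--Roch on the smooth projective toric variety $X$ attached to $P$, exploiting in an essential way the unimodularity hypothesis (so that $X$ is smooth and its Todd class admits a concrete product formula).

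First I would set up the dictionary between the analytic and algebro-geometric sides. The shifted polytope $P_{\mathbf h}$ corresponds to the divisor $D_{\mathbf h} = D + \sum_i h_i D_i$, where $D = \sum_i b_i D_i$ is the divisor associated to $P$ and $D_i$ is the prime invariant divisor of the facet $F_i$. A classical identity from toric geometry says
\[ \vol(P_{\mathbf h}) = \frac{1}{d!}\int_X \Bigl(D + \sum_i h_i D_i\Bigr)^{d}, \]
so $\vol(P_{\mathbf h})$ is the degree-$d$ part of $\int_X e^{D + \sum_i h_i D_i}$.

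Next I would use Demazure vanishing: for $D$ ample, $H^j(X,\mathcal O(D)) = 0$ for $j>0$ and $H^0(X,\mathcal O(D))$ has a basis indexed by $P\cap\Z^n$. Hence $\chi(X,\mathcal O(D)) = \#(P\cap\Z^n)$. Applying Hirzebruch--Riemann--Roch then yields
\[ \#(P\cap\Z^n) = \int_X e^{D}\cdot \Todd(X), \]
and because $X$ is smooth toric, the generalized Euler sequence forces $\Todd(X) = \prod_i \frac{D_i}{1-e^{-D_i}}$.

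Finally I would match the two sides by identifying operators. Under $\partial/\partial h_i \leftrightarrow D_i$, both operators act on $e^{D+\sum_j h_j D_j}$ by multiplication by $D_i$, so the multivariate Todd operator $\Todd_{\mathbf h} = \prod_i \bigl(\partial_{h_i}/(1-e^{-\partial_{h_i}})\bigr)$ corresponds exactly to multiplication by $\Todd(X)$. Setting $\mathbf h = 0$ afterwards picks out the degree-$d$ term, so
\[ \Todd_{\mathbf h}\, \vol(P_{\mathbf h})\bigr|_{\mathbf h = 0} = \int_X e^D\cdot \Todd(X) = \#(P\cap\Z^n). \]
The main obstacle is the justification of the operator dictionary: one has to verify that replacing formal power series in the $\partial_{h_i}$ by the corresponding intersection-theoretic expressions in the $D_i$ really is a ring homomorphism on the relevant truncated Taylor expansions, which uses that $\vol(P_{\mathbf h})$ is a polynomial of degree exactly $d$ and that all monomials $\prod D_i^{a_i}$ of total degree $d$ are computed by the same combinatorial recipe as the mixed partials of $\vol(P_{\mathbf h})$ at the origin. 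A secondary technical point is the smooth-toric factorization of $\Todd(X)$; this is standard but depends critically on unimodularity, without which individual cones of the normal fan are not generated by a lattice basis and the product formula fails.
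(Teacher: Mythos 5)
Your argument is correct, and it coincides with how this theorem is actually established in the sources the paper cites for it: the paper states the Khovanskii--Pukhlikov theorem without proof, and your Hirzebruch--Riemann--Roch derivation (Demazure vanishing to get $\chi(X,\mathcal O(D))=\#(P\cap\Z^n)$, the product formula $\Todd(X)=\prod_i D_i/(1-e^{-D_i})$ from the Euler sequence, and the identification of $\partial/\partial h_i$ with multiplication by $D_i$ acting on $\vol(P_{\mathbf h})=\frac{1}{d!}\int_X(D+\sum_i h_iD_i)^d$) is precisely the proof in \cite[Ch.~13.5]{cox2011toric}. The only point worth making explicit is that $D$ is ample on the toric variety of the normal fan of $P$, so Demazure vanishing applies, and that the unimodularity hypothesis is what guarantees both the smoothness of $X$ and the termination of the Todd operator on the degree-$d$ polynomial $\vol(P_{\mathbf h})$ --- both of which you correctly flag.
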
 
In \Cref{K-Ptheorem}, one applies the Todd operator to the volume of a shifted version $P_\mathbf{h}$ of the polytope $P$. In our setting of multivariate volume polynomials that are constant on fixed cones of the polytrope region in the Gröbner fan, a nice simplification occurs that allows us to ignore this shift. 
As discussed in Section \ref{subsec:background_polytropes}, a polytrope $P$ can be described as
$$P = \{\mathbf{x} \in \R ^{n-1} : x_i - x_j \leq c_{ij},\,  - c_{ni} \leq x_i \leq c_{in} \},$$
where $i, j \in [n-1], i\neq j$, 
for some $\mathbf c \in \R^{n^2-n}.$ Its volume is given by evaluating the multivariate volume polynomial $\vol_P(\mathbf a)$ at $\mathbf c$.
The shifted polytrope $P_\mathbf{h}$ has the description
$$  P_\mathbf{h} = \big \{ \mathbf{x} \in \R^{n-1} : x_i - x_j \leq c_{ij}+h_{ij}, \  -(c_{ni} + h_{ni}) \leq x_i \leq  c_{in} + h_{in} \big \},$$
for any
$\mathbf{h} \in \R^{n^2 - n}$. As long as $\mathbf{h}$ is small enough, the shifted polytrope remains in the same cone and its volume polynomial is given by evaluating the multivariate volume polynomial $\vol_{P}(\mathbf a +\mathbf h)$ at $\mathbf c$. 
As $\vol_P(\mathbf a)$ is a polynomial, 
$$
\left(\prod_{i\ne j\in [n]}\frac{\partial}{\partial h_{ij}}\right)\vol_P(\mathbf a + \mathbf h )\big |_{\mathbf h = 0}
= \left(\prod_{i\ne j \in [n]}\frac{\partial}{\partial a_{ij}}\right) \vol_P(\mathbf a)
.$$
\begin{example}\label{ex:ehr_pol_2d}
We now apply the Todd operator to the multivariate volume polynomial of the tropical hexagon $\Qa$ in our running example. As in the previous example, this $2$-dimensional example can be computed with more elementary methods, such as Pick's formula. However, this example generalizes to higher dimensions, and we use it to present our methods in a manageable size.
Recall from \Cref{ex:vol_pol_2d} that the volume polynomial is
$$
\vol_{\Qa}(\textbf{a}) =\sum_{i \neq j \in [3]}-\frac{1}{2} a_{ij}^2 
+ \sum_{i\neq j \neq k \in [3]} (a_{ij}a_{ik} +a_{ji}a_{ki}).
$$
Evaluating the polynomial at a specific Kleene star $\bc$ returns the volume of the corresponding polytrope.
Applying the multivariate Todd operator to this volume polynomial, we get:
\begin{align*}
\Todd_{\mathbf{h}}\vol_{\Qa}(\mathbf a+\mathbf{h})\bigg |_{\mathbf h = 0}
&= \bigg( \tfrac{ \frac{\partial}{\partial h_{32}} }{1 -\exp(-\frac{\partial}{\partial h_{32}})} \bigg)\dots \bigg( \tfrac{ \frac{\partial}{\partial h_{13}} }{1 -\exp(-\frac{\partial}{\partial h_{13}})} \bigg)\bigg (1+ \sum_{k \geq 1}(-1)^k\frac{B_k}{k!}\big(\frac{\partial}{\partial h_{12}}\big)^k \bigg )\vol_{\Qa}(\mathbf{a} +\mathbf{h} )  \bigg|_{\mathbf h=0}  \\ 
&= \bigg( \tfrac{ \frac{\partial}{\partial a_{32}} }{1 -\exp(-\frac{\partial}{\partial a_{32}})} \bigg)\dots \bigg( \tfrac{ \frac{\partial}{\partial a_{13}} }{1 -\exp(-\frac{\partial}{\partial a_{13}})} \bigg)\bigg (1+ \sum_{k \geq 1}(-1)^k\frac{B_k}{k!}\big(\frac{\partial}{\partial a_{12}}\big)^k \bigg )\vol_{\Qa}(\textbf{a})
\\
&=\bigg( \tfrac{ \frac{\partial}{\partial a_{32}} }{1 -\exp(-\frac{\partial}{\partial a_{32}})} \bigg)\dots \bigg( \tfrac{ \frac{\partial}{\partial a_{13}} }{1 -\exp(-\frac{\partial}{\partial a_{13}})} \bigg)\bigg(\vol_{\Qa}(\textbf{a})+\frac{1}{2}[-a_{12}+a_{13} +a_{32}] -\frac{1}{12}\bigg) \\
& = -\frac{1}{2}a_{12}^2+a_{12}a_{13}-\frac{1}{2}a_{13}^2-\frac{1}{2}a_{21}^2+a_{13}a_{23}+a_{21}a_{23}-\frac{1}{2}a_{23}^2+a_{21}a_{31}-\frac{1}{2}a_{31}^2 \\
& \;\;\;\; +a_{12}a_{32}+a_{31}a_{32}  -\frac{1}{2}a_{32}^2+\frac{1}{2}a_{12}+\frac{1}{2}a_{13}+\frac{1}{2}a_{21}+\frac{1}{2}a_{23}+\frac{1}{2}a_{31}+\frac{1}{2}a_{32}+1 \\
&= \vol_{\Qa}(\textbf{a}) + \sum_{i\neq j \in [3]}\frac{ a_{ij}}{2} + 1.
\end{align*}
Hence, for integral Kleene stars $\bc \in \Z^6$ (i.e. whenever $\Qa[\bc]$ is unimodular), we get that
$$\#(\Qa[\bc] \cap \Z^2) = \vol_{\Qa[\bc]}(\textbf{c}) + \sum_{i\neq j \in [3]}\frac{ c_{ij}}{2} + 1.$$
Note that this implies that $\sum_{i\neq j \in [3]}\frac{ c_{ij}}{2}$ is the number of lattice points on the boundary of $\Qa[c]$.
Evaluating this polynomial at the weight vector $\mathbf c = (3,2,3,4,5,6)$ gives 52, the number of lattice points in the polytrope. Evaluating at $t\mathbf c = (3t,2t,3t,4t,5t,6t)$ gives the univariate Ehrhart polynomial of the polytrope $\Qa[\bc]$:
$$\ehr_{\Qa[c]}(t) = \frac{79}{2}t^2 + \frac{23}{2}t +1 .$$
\end{example}

\subsection{Computing multivariate $h^\ast$-polynomials}\label{subsec:h_pols_computation}
Finally, we can also compute a multivariate 
$h^\ast$-polynomial from the multivariate Ehrhart polynomial 
corresponding to each tropical type. 
We explain the method here. The interested reader can also consult \cite{BR} for further details.

As discussed in \Cref{subsec:ehrhart}, the coefficients $\{ h_i\}$ of the $h^\ast$-polynomial $h^\ast (t) = h_0 +h_1 t + \dots + h_d t^d$ are the coefficients of the Ehrhart polynomial expressed in the basis 
$\left\{ \binom{t+d - i}{d} \mid i \in \{0,1,\dots,d\}\right\}$
of the vector space of polynomials in $t$ of degree at most $d$. To transform the Ehrhart polynomial to the $h^\ast$-polynomial, we perform a change of basis. The Eulerian polynomials play a central role in this transformation.

The Eulerian polynomials $A_d(t)$ are defined through the generating function:
$$\sum_{j\ge 0} j^d t^j = \frac{A_d(t)}{ (1-t)^{d+1}}.$$
 Explicitly, we can write the Eulerian polynomials as
 \[
  A_d(t) = \sum_{m=1}^{d} A(d,m-1) t^m,
 \]
 where $A(d,m)$ is the Eulerian number that counts the number of permutations of $[d]$ with exactly $m$ ascents. The first few Eulerian polynomials are $A_0(t) = 1, A_1(t) = t,$ and $A_2(t) = t^2 +t$.
Recall the Ehrhart series of a $d$-dimensional polytope:
\begin{align*}  
\Ehr_P(t) = \sum_{k \geq 0} \ehr_P(k) t^k  
         = \sum_{k \geq 0}( \lambda_0 + \lambda_1 k + \dots + \lambda_d k^d )t^k 
         = \sum_{i = 0}^{d} \frac{\lambda_i A_i(t) }{(1-t)^{i+1}}.
\end{align*}
On the other hand, we have
$$
\Ehr_P(t) = \frac{ h^\ast_P (t)}{(1-t)^{d+1}}. 
$$
Comparing yields an expression for the $h^\ast$-polynomial in terms of the
coefficients of the Ehrhart polynomial:
$$ 
h^\ast_P(t) = \sum_{i=0}^d \lambda_i A_i(t) (1-t)^{d-i}.
$$
To compute the multivariate $h^\ast$-polynomials, we collect the terms of each degree in the Ehrhart polynomials and apply the transformation.

\begin{example}\label{ex:h*_pol_2d}
    We compute the multivariate $h^*$-polynomial of the hexagon $\Qa$ 
    from the Ehrhart polynomial 
    $\ehr_{\Qa}(t\ba)=\lambda_2 t^2 + \lambda_1 t + 1 $ from \Cref{ex:ehr_pol_2d}. With these coefficients we can compute 
\begin{align*}
    \lambda_2 A_2(t) (1-t)^0 =& \bigg(\sum_{i\neq j \in [3]} -\frac{1}{2} a_{ij}^2 + \sum_{i\neq j \neq k \in [3]}[a_{ij}a_{ik} + a_{ji}a_{ki}] \bigg)(t^2 + t) \\
    \lambda_1 A_1(t) (1-t)^1 =&  \big(\sum_{i\neq j \in [3]}\frac{1}{2}a_{ij}\big)(-t^2 +t)  \\
    \lambda_0 A_0(t) (1-t)^2 =& t^2-2t+1.
\end{align*}
The sum of these three polynomials gives the multivariate $h^*$-polynomial of the hexagon:
\begin{align*}
    h^\ast_{\Qa}(\mathbf a, t) = & \bigg( \sum_{i \neq j \in [3]} -\frac{1}{2}[a_{ij}^2 + a_{ij}]+ \sum_{i\neq j \neq k \in [3]}[a_{ij}a_{ik}+a_{ji}a_{ki}] + 1 \bigg) t^2 \\
    &+ \bigg( \sum_{i \neq j \in [3] }\frac{1}{2} [a_{ij}-a_{ij}^2 ] +\sum_{i \neq j \neq k \in [3]}[a_{ij}a_{ik}+a_{ji}a_{ki}] -2 \bigg) t +1. 
\end{align*} 
Evaluating $h_{\Qa}^\ast(\ba,t)$ at $(\bc,t) = (3,2,3,4,5,6,t)$ 
yields the univariate $h^\ast$-polynomial of the hexagon $\Qa[\bc]$ from \Cref{ex:vol_pol_2d}:
    \[ h^\ast_{\Qa[\bc]} (\mathbf c, t) = 29t^2 + 49t + 1.\]
    The coefficients of $h^*_{\Qa[\bc]}(\mathbf c, t)$ sum to 79, which equals the normalized volume of $\Qa[\bc]$ observed previously in \Cref{subsec:vol_pols_computation}.
\end{example}

\section{Experiments and Observations}
\label{sec:compuatations}
In this section we describe the results of our application of Section \ref{sec:methods} for maximal polytropes of dimension at most 4. Since the Ehrhart and $h^*$-polynomials are computed from the volume polynomials, we mainly focus our investigation on the volume polynomials. All scripts and results of our computations can be found at 
\begin{center}
\url{https://github.com/mariebrandenburg/polynomials-of-polytropes}.
\end{center}

\subsection{Data and computation}

In the computation that is described in this section, we used data from \cite{JS} containing the vertices of one polytrope for each maximal tropical type of dimension $3$ and $4$ up to the action of the symmetric group. The vertices of each polytrope were arranged to form a Kleene star and corresponding weight vector $\bf c$. The methods described in Section \ref{sec:methods} were then applied to obtain multivariate volume, Ehrhart, and $h^*$-polynomials for the corresponding tropical type. Our computations were performed on a desktop computer with a 3.6 GHz quad-core processor. On average, the running time was about 5 minutes for each 4-dimensional volume polynomial, 0.15 seconds for each Ehrhart polynomial, and 0.73 seconds for each $h^*$-polynomial. Parallelization is possible as the computations are independent for each tropical type.

In order to verify our computational results, we independently computed the univariate volume and Ehrhart polynomials with respect to our input data and compared them with our multivariate results, as explained in \Cref{ex:vol_pol_2d,ex:ehr_pol_2d,ex:h*_pol_2d}. To check the $h^*$-polynomial of a representative polytrope, we attempted to compute its $h^*$-polynomial by computing its Ehrhart series with \texttt{Normaliz} and compared this with our multivariate $h^*$-polynomial evaluated at the corresponding weight vector. We attempted to perform this check on a cluster, capping the \texttt{Normaliz} computation of each polytrope's Ehrhart series at 10 minutes. We checked 1459 polytropes. For 670 of them, the \texttt{Normaliz} computation finished in under 10 minutes, and the $h^*$-polynomials matched. Checking the \texttt{Normaliz} computation for individual polytropes revealed that the Ehrhart series computation could take as long as 12 hours, in comparison to the 5 minutes required by our methods. 

\subsection{2-dimensional polytropes}
First we consider $2$-dimensional polytropes. As noted in \Cref{subsec:background_polytropes}, there is a unique class of maximal polytropes up to permutation of vertex labels. The volume, Ehrhart, and $h^*$-polynomials are computed in Examples \ref{ex:vol_pol_2d}, \ref{ex:ehr_pol_2d}, and \ref{ex:h*_pol_2d} respectively. We note that the volume, Ehrhart, and $h^*$-polynomials are all symmetric with respect to the $S_3$ action, as expected.
\subsection{3-dimensional polytropes}
\label{subsec:3D-vols}

In the case of maximal 3-dimensional polytropes, up to the symmetric group action there are 6 types of maximal polytropes. We applied the algorithms in \Cref{sec:methods} to nonnegative points in maximal cones corresponding to these 6 types, yielding the volume, Ehrhart, and $h^*$-polynomials of their corresponding tropical types.

\begin{example}\label{ex:3volpol}
One of the six volume polynomials is

\begin{align*}
&2 a_{12}^3 - 3 a_{12}^2 a_{13} + a_{13}^3 - 3 a_{12}^2 a_{14} + 6 a_{12} a_{13} a_{14} - 3 a_{13}^2 a_{14} + a_{21}^3 - 3 a_{13}^2 a_{23} + 6 a_{13} a_{14} a_{23} - 3 a_{14}^2 a_{23} \\
&- 3 a_{14} a_{23}^2 - 3 a_{21} a_{23}^2 + a_{23}^3 - 3 a_{21}^2 
 a_{24} + 6 a_{14} a_{23} a_{24} + 6 a_{21} a_{23} a_{24} - 3 a_{14} a_{24}^2 - 3 a_{23} a_{24}^2 + a_{24}^3 - 3 a_{21}^2a_{31}\\ 
& + 6 a_{21} a_{24} a_{31} - 3 a_{24}^2 a_{31} - 3 a_{24} a_{31}^2 + a_{31}^3 - 3 a_{12}^2 a_{32} + 6 a_{12} a_{14} a_{32} 
 - 3 a_{14}^2 a_{32} - 3 a_{31}^2 a_{32} - 3 a_{14} a_{32}^2 \\
& + 6  a_{14} a_{24} a_{34} 
 + 6 a_{24} a_{31} a_{34} + 6 a_{14} a_{32} a_{34} + 6 a_{31} a_{32} a_{34} - 3 a_{14} a_{34}^2 - 3 a_{24} a_{34}^2 - 3 a_{31} a_{34}^2 - 3 a_{32} a_{34}^2 
 + 2 a_{34}^3 \\
 & + 6 a_{21} a_{31} a_{41} - 3 a_{31}^2 a_{41} 
 + 6 a_{31} a_{32} a_{41} - 3 a_{32}^2 a_{41} - 3 a_{21} a_{41}^2 - 3 a_{32} a_{41}^2 + a_{41}^3 - 3 a_{12}^2 a_{42} + 6 a_{12} a_{13} a_{42} \\
& - 3 a_{13}^2 a_{42} + 6 a_{12} a_{32} a_{42} + 6 a_{32} a_{41} a_{42} - 3 a_{13} a_{42}^2 - 3 a_{32} a_{42}^2 - 3 a_{41} a_{42}^2 + a_{42}^3 - 3 a_{21}^2 a_{43} + 6 a_{13} a_{23} a_{43} \\
& + 6 a_{21} a_{23} a_{43}   - 3 a_{23}^2 a_{43} + 6 a_{21} a_{41} a_{43} - 3 a_{41}^2 a_{43} + 6 a_{13} a_{42} a_{43} + 6 a_{41} a_{42} a_{43} 
 - 3 a_{13} a_{43}^2 - 3 a_{21} a_{43}^2 \\ 
 &- 3 a_{42} a_{43}^2 + a_{43}^3.
\end{align*}
\end{example}

We devote the remainder of this subsection to an analysis of the coefficients of the normalized volume polynomials, which we write as follows:
\[ \Vol(\{\mathbf x\in \RR^4 \mid x_i-x_j \le a_{ij}, x_4=0\}) = \sum_{\mathbf v} \alpha_{\mathbf v} {\bf a}^{\mathbf v},\]
where $\mathbf v\in \mathbb N^{12}$ has coordinates summing to 3. Note that there is a natural decomposition of the set of all possible exponent vectors $\mathbf v$ into three different disjoint subsets $T_{111}, T_{21},$ and $T_3$, one for each partition of 3. 

Recall that the 6 types of maximal 3-dimensional polytropes correspond to different regular central triangulations of the fundamental polytope $FP_4$, as discussed in \Cref{subsec:background_polytropes}. A regular central triangulation is determined by a choice of triangulating edge in each of the six square facets of $FP_4$. The coefficients of the volume polynomials encode the data of these six facet triangulations as follows:

\begin{itemize}
    \item Let $\mathbf v\in T_{111}$, so that the monomial ${\bf a}^{\mathbf v}$ is $a_{ij} a_{kl} a_{st}$ for some $i\ne j, k\ne l, s \ne t$ and $(i,j)\ne (k,l) \ne (s,t)$. The coefficients $\alpha_{\bv}$ in this case are determined directly by the triangulation of $FP_4$: 
    \[\alpha_{\bv}=\begin{cases}
        6 & \text{if }\be_i-\be_j, \be_k-\be_l, \be_s-\be_t\text{ form a 2-dimensional simplex in the } \\
          & \text{corresponding regular central triangulation,}\\
        0 & \text{ otherwise.}
    \end{cases}\]
    \item Let $\bv\in T_{21}$, so that the monomial ${\mathbf a}^\bv$ is $a_{ij}^2 a_{kl}$ for some $i\ne j$, $k\ne l$, and $(i,j)\ne (k,l)$. The coefficient $\alpha_{\bv}$ is nonzero only if $\be_i-\be_j$ and $\be_k-\be_l$ are adjacent vertices of $FP_4$. In that case, it is determined by the square facet $S$ of $FP_4$ containing $\be_i-\be_j$ and $\be_k-\be_l$:
    \[\alpha_{\bv}=\begin{cases}
        -3 & \text{if }\be_k-\be_l\text{ incident to triangulating edge of }S\\
        0 & \text{otherwise.}
    \end{cases}\] 
    \item Let $\bv\in T_3$, so that the monomial ${\bf a}^\bv$ is $a_{ij}^3$ for some $i\ne j$. The coefficient $\alpha_{\bv}$ is given by
    \[\alpha_{\bv}= {7-\deg(\be_i-\be_j)},\]
    where $\deg(\be_i-\be_j)$ is the number of edges incident to the vertex $\be_i-\be_j$ in the regular central subdivision of $FP_4$.
    \end{itemize}
We note that the above descriptions of the coefficients of the volume polynomial imply that the sums of coefficients corresponding to each partition of $3$ are the same for all six volume polynomials:
\[\sum_{\bv\in T_3} \alpha_{\bv} = 12,\ \sum_{\bv\in T_{21}} \alpha_{\bv} = -108,\ \sum_{\bv\in T_{111}} \alpha_{\bv} = 120.\]
\begin{example}
\label{ex:3d-polytrope-volume}
    Consider the polytrope $P$ with facet coefficients $c_{ij}$ given by the matrix
    \[\begin{pmatrix}
    0 & 11 & 20 & 29\\
    21 & 0 & 19 & 20\\
    20 & 29 & 0 & 11\\
    19 & 20 & 21 & 0
    \end{pmatrix}.\]
    Assigning the weight $c_{ij}$ to the vertex $\be_i-\be_j$ of the fundamental polytope $FP_4$, and weight 0 to the central vertex at the origin, produces the regular central triangulation in \Cref{fig:3d-polytrope-subdiv}. 
    The volume polynomial corresponding to this polytrope is the polynomial displayed in \Cref{ex:3volpol}. We see that the coefficients corresponding to $a_{12}^3, a_{12}^2a_{14}, a_{32}^2a_{42},$ and $a_{31}a_{32}a_{41}$ are equal to $2, -3, 0,$ and $6$ respectively, as summarized by the discussion above.
    \begin{figure}[!ht]
        \centering
        \includegraphics[height=3in]{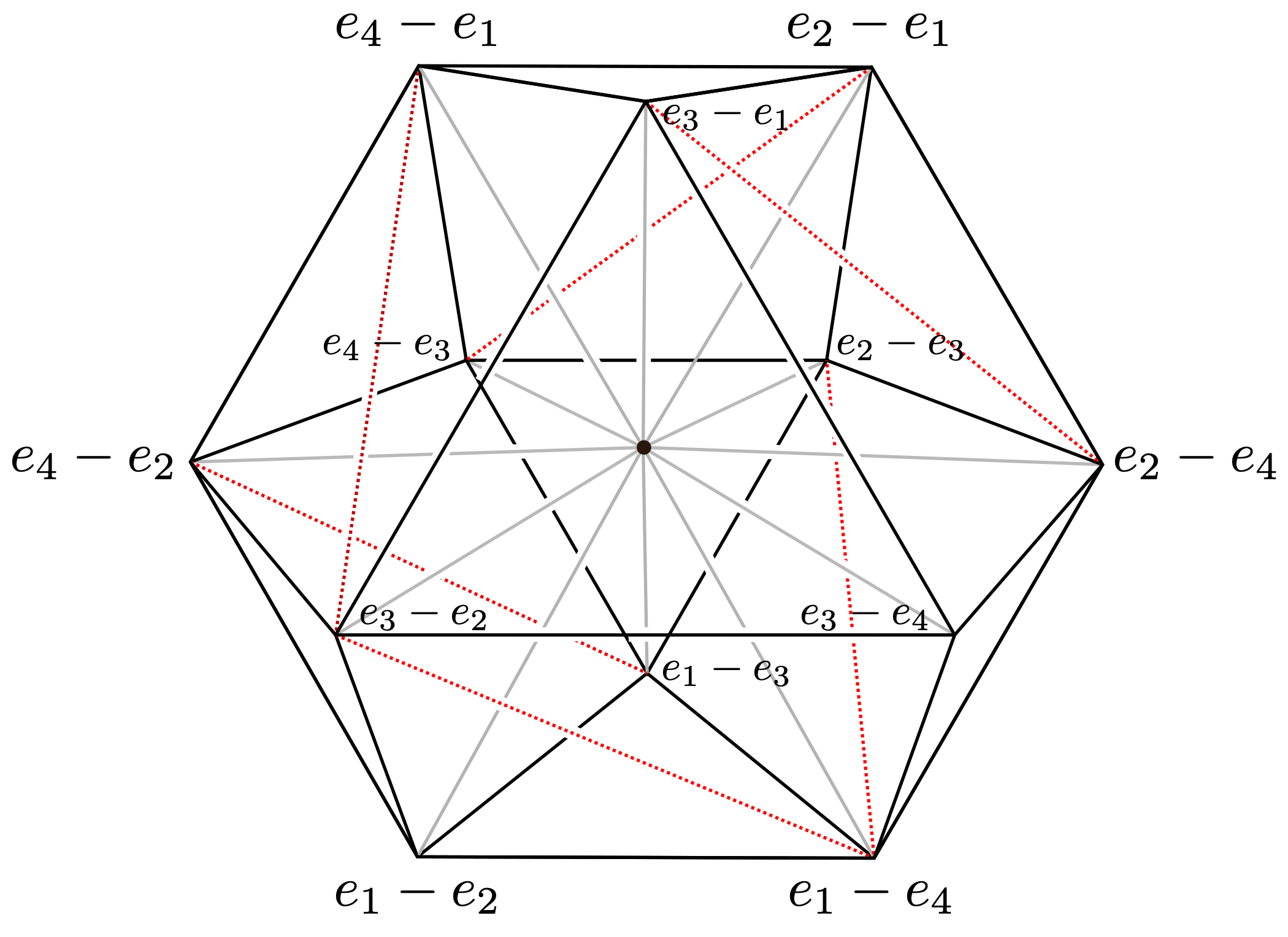}
        \caption{The regular central triangulation of $FP_4$ corresponding to the polytrope in \Cref{ex:3d-polytrope-volume}, with triangulating edges of square facets of $FP_4$ colored red and dashed.}
        \label{fig:3d-polytrope-subdiv}
    \end{figure}
\end{example}

\subsection{4-dimensional polytropes}
\label{subsec:4D-vols}
Finally we consider 4-dimensional polytropes. In this case, up to the action of the symmetric group $S_5$ there are 27248 types of maximal polytropes. We applied the methods of \Cref{sec:methods} to obtain multivariate volume, Ehrhart, and $h^*$-polynomials for these polytropes.

We can embed the 27248 normalized volume polynomials using the canonical basis in the vector space of homogeneous polynomials of degree 4, having dimension $\binom{23}{4}=8855$. The affine span of these volume polynomials has dimension 70, implying that there is much structure in their coefficients. We note that this equals the number of facets in a regular central triangulation of $FP_5$. 

We were able to experimentally verify the facts collected in \Cref{tab:4d-stats}. For example, all coefficients for monomials corresponding to the partition $2 + 2 = 4$ lie in the set $\{0,6\}$, and the sum of all such coefficients is $300$.
Furthermore, the $S_5$-orbit of the monomials $a_{12}a_{13}a_{14}a_{15}$ and $a_{21}a_{31}a_{41}a_{51}$ always appears in the volume polynomial with coefficient $24$. Finally, the coefficient $-4$ always appears exactly twice as often as the coefficient $12$. 
\begin{table}[h]
\begin{center}
\begin{tabular}{ |c|c|c|c| } 
 \hline
Partition & Example monomial & Possible coefficients & Coefficient sum \\ [0.5ex] 
 \hline 
 4       & $a_{12}^4$                 & $-6,-3,-2,-1,0,1,2,3$ & $-20$   \\ [0.5ex] 
 3 + 1   & $a_{12}^3a_{13}$           & $-4,0,4,8$            & $320$  \\ [0.5ex] 
 2 + 2   & $a_{12}^2a_{13}^2$         & $0,6$                 & $300$  \\ [0.5ex] 
 2+1+1   & $a_{12}a_{13}a_{14}^2$     & $-12,0,12$            & $-2160$\\ [0.5ex] 
 1+1+1+1 & $a_{12}a_{13}a_{14}a_{15}$ & $0,24$                & $1680$ \\ [0.5ex] 
 \hline
\end{tabular}
\end{center}
\caption{Summary statistics for coefficients of 4-dimensional volume polynomials.}
\label{tab:4d-stats}
\end{table}

As in the 3-dimensional case, a monomial corresponding to the partition $1+1+1+1=4$ had coefficient $24$ if and only if it appeared as a face in the corresponding triangulation. Beyond these observations, we were unable to detail the exact relationship between the volume polynomials and their corresponding regular central triangulations.
\begin{question}
How do the coefficients of the volume polynomials of maximal $(n-1)$-dimensional polytropes reflect the combinatorics of the corresponding regular central subdivision of $FP_n$?
\end{question}
A natural first step would be to prove that, for $\bv$ with partition $1+1+\dots+1=n-1$, the coefficient $\alpha_{\bv}$ is nonzero if and only if it corresponds to a face in the regular central triangulation.

\printbibliography
\end{document}